\theoremstyle{plain}
\newtheorem{theorem}{Theorem}[section]
\newtheorem{proposition}[theorem]{Proposition}
\newtheorem{lemma}[theorem]{Lemma}
\theoremstyle{definition}
\newtheorem{assumption}[theorem]{Assumption}
\theoremstyle{remark}
\newtheorem{remark}[theorem]{Remark}
\newcommand{\Rd}{ {\mathbb{R}^d}}
\newcommand{\R}{ \mathbb{R}}
\newcommand{\E}{\mathcal{E}}
\newcommand{\U}{\mathcal{U}}
\newcommand{\ve}{\varepsilon}
\newcommand{\var}{\textup{Var}}
\renewcommand{\d}{\mathrm{d}}
\newcommand{\supp}{\textup{supp}}
\newcommand{\law}{\textup{Law}}
\newcommand{\argmin}{\textup{argmin}}
\newcommand{\sym}{\textup{Sym}_d}
\newcommand{\pd}{\textup{Sym}^{++}_d}
\newcommand{\psd}{\textup{Sym}^{+}_d}
\newcommand{\LBW}{\textsf{LBW}}
\newcommand{\tr}{\textup{tr}}
\newcommand{\W}{\mathbb{W}}
\newcommand{\KL}{\textup{KL}}
\newcommand{\LW}{\textup{L}\mathbb{W}}
\newcommand{\targ}{\textup{targ}}
\newcommand{\clip}{\textup{clip}}
\newcommand{\swarm}{S}
\newcommand{\rev}{}
\icmltitlerunning{Variational inference via Gaussian interacting particles in the Bures--Wasserstein geometry}
\begin{document}

\twocolumn[
  \icmltitle{Variational inference via Gaussian interacting particles \\ in the Bures--Wasserstein geometry}



  \icmlsetsymbol{equal}{*}

  \begin{icmlauthorlist}
    \icmlauthor{Giacomo Borghi}{hw}
    \icmlauthor{Jos\'{e} A. Carrillo}{oxford}
  \end{icmlauthorlist}

  \icmlaffiliation{hw}{Maxwell Institute for Mathematical Sciences and Department of Mathematics, School of Mathematical and Computer Sciences, Heriot-Watt University, Edinburgh, UK}
    \icmlaffiliation{oxford}{Mathematical Institute, University of Oxford, Oxford, UK}
 
  \icmlcorrespondingauthor{Giacomo Borghi}{g.borghi@hw.ac.uk}
  \icmlcorrespondingauthor{Jos\'{e} A. Carrillo}{jose.carrillo@maths.ox.ac.uk}

  \icmlkeywords{Gaussian variational inference, Consensus-based optimization, Bures–Wasserstein space, Linear optimal transport, Interacting particle systems}

  \vskip 0.3in
]



\printAffiliationsAndNotice{}  





\begin{abstract}
Motivated by variational inference methods, we propose a zeroth-order algorithm for solving optimization problems in the space of Gaussian probability measures. The algorithm is based on an interacting system of Gaussian particles that stochastically explore the search space and self-organize around global minima via a consensus-based optimization (CBO) mechanism. 
Its construction relies on the Linearized Bures–Wasserstein (LBW) space, a novel parametrization of Gaussian measures we introduce for efficient computations. 
We establish well-posedness and study the convergence properties of the particle dynamics via a mean-field approximation. Numerical experiments on variational inference tasks demonstrate the algorithm’s robustness and superior performance with respect to \rev{deterministic} gradient-based method in presence of \rev{low-dimensional} non log-concave targets.
\end{abstract}

\section{Introduction}

\paragraph{Problem.} Given a target probability measure $\mu^\targ \in \mathcal{P}(\Rd)$ a widespread problem in statistics and computational sciences consists of finding
\begin{equation} \label{eq:pb}
\mu^\star \in \underset{\mu\in \Pi}{\argmin} \, \E(\mu)
\end{equation}
where $\Pi$ is a family of parametrized probability measures, and $\E(\cdot) = \mathcal{D}(\cdot\,|\,\mu^\targ)$ is a functional which quantifies the discrepancy from a target. A classical example is the Bayesian inference problem where $\mu^\targ(\d x)\propto \exp(-V(x))\d x$ for some $V:\Rd \to \R$, and the discrepancy measure corresponds to the Kullback--Leibler divergence, $\E(\cdot) = \KL(\cdot\,|\,\mu^\targ)$ \cite{blei2017review}. The settings we considered though, are the more general one of Variational Inference (VI), where $\E$ can be given, for instance, by the  Maximum Mean Discrepancy, $f$-divergencies, $\chi^2$-divergence, R{\'e}nyi’s $\alpha$-divergence  \cite{blei2017review,knoblauch2022}.  

In this work we consider the 
Gaussian VI problems \cite{diao2023forward,katsevich2024approximation} where $\Pi\subset\mathcal{P}(\Rd)$ is the set of $d$-dimensional normal distributions
\[
\Pi = \mathcal{N}^d := \left\{\mu = \mathcal{N}(m,\Sigma) \,\mid \,m\in \Rd,\, \Sigma\in \psd \right\}
  \]
 with $\psd$ being the space of symmetric positive semi-definite matrices. This approach offers a computationally efficient alternative to traditional Bayesian inference by approximating posterior distributions with tractable Gaussian families, enabling faster approximate inference compared to Markov Chain Monte Carlo methods \cite{spokoiny2025gaussian,katsevich2024approximation}.

\paragraph{State of the art.}
Different works, see, for  instance,  \cite{alvarez2022optimizing, lambert2022variational,diao2023forward},  propose first-order algorithms  as suitable discretization of gradient flows  in the space probability measures \cite{ags2008} equipped with the $L^2$-Wasserstein distance.
When restricting to the space of non-degenerate Gaussians, the distance is known as the Bures--Wasserstein (BW) distance and can be derived from a suitable Riemannian structure \cite{bathia2019,Malago2018}. First-order algorithms with respect  to the Hellinger--Kantorovich  metric have been proposed in \cite{tse2025hk}.

\begin{figure*}[t]
  \vskip 0.2in
  \centering
  \includegraphics[width=0.245\textwidth]{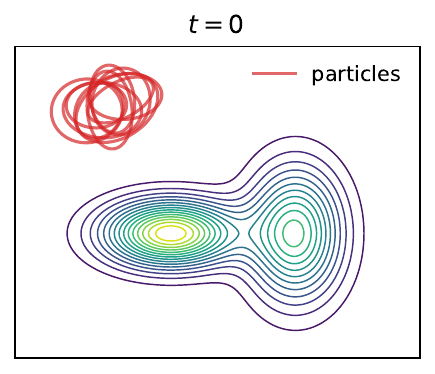}
  \includegraphics[width=0.245\textwidth]{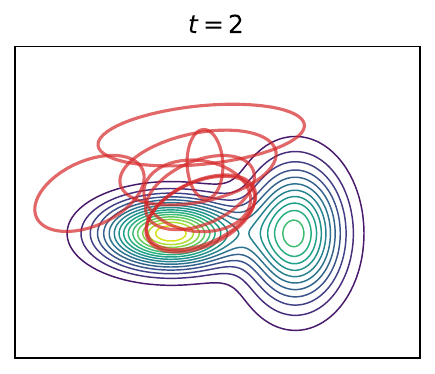}
  \includegraphics[width=0.245\textwidth]{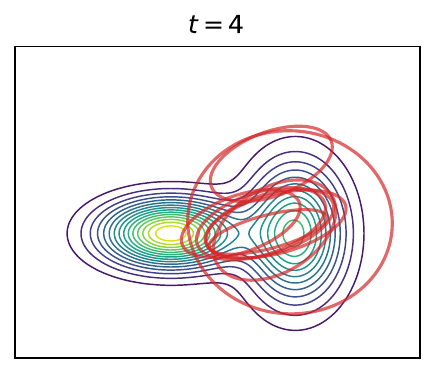}
  \includegraphics[width=0.245\textwidth]{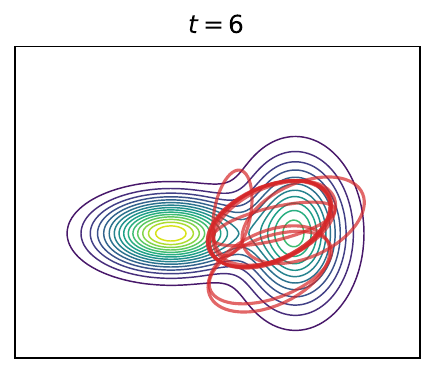} \\
  \includegraphics[width=0.245\textwidth]{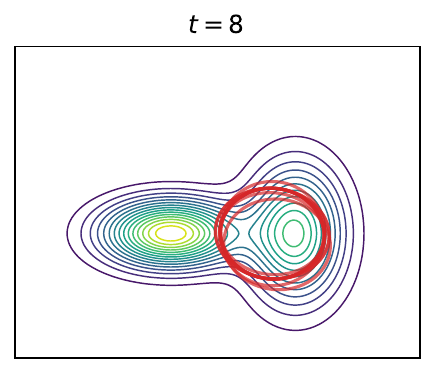}
  \includegraphics[width=0.245\textwidth]{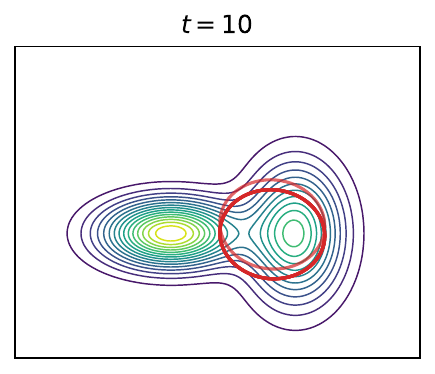}
  \includegraphics[width=0.245\textwidth]{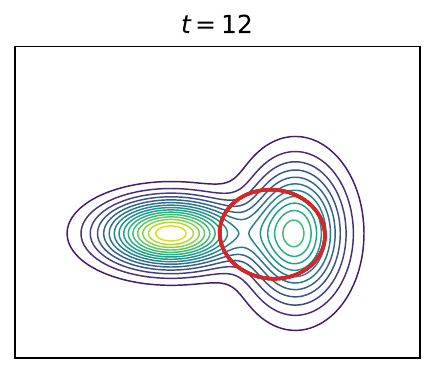}
  \includegraphics[width=0.245\textwidth]{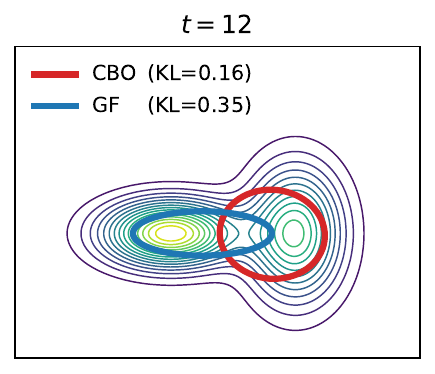}

  \caption{
    Evolution of $N = 10$ Gaussian particles for minimization of
    $\mathrm{KL}$ divergence from a bi-modal target measure
    (shown by contour lines).
    Particles evolve according to the CBO-type dynamics proposed in this
    paper for problems of the form~\eqref{eq:pb}
    (see Section~\ref{sec:cbo}).
    The final snapshot compares the solution computed by the proposed CBO
    method with the BW Gradient Flow solution~\cite{lambert2022variational};
    the corresponding KL values are also reported. 
  }
  \label{fig:illu}
\end{figure*}

Such optimization dynamics can be conveniently written as a system of ODEs for the mean $m\in \Rd$ and the covariance matrix $\Sigma\in \psd$  and have shown superior performance with respect to other Gaussian Bayesian inference methods such as the Laplace method \cite{laplace1986}, see \cite{lambert2022variational}.  
As in Euclidean optimization, though,  convexity of objective functional plays a central role in the convergence of gradient-based methods, and the optimizing dynamics may get stuck in local minima, if present.

\paragraph{Our contributions using interacting particles.}  We propose a novel global, gradient-free optimization algorithm for  \eqref{eq:pb} based on a system of Gaussian particles. Particles follows a Consensus-Based Optimization  (CBO) dynamics which exploits the Bures--Wasserstein geometry of $\mathcal{N}^d$. 

\paragraph{Starting point.} In the Euclidean CBO algorithm \cite{pinnau2017consensus}, each particle stochastically moves towards a consensus point consisting of a 
weighted average of the entire ensemble, that is considered a proxy for the minimizer. In  \cite{borghi2025wass} an algorithm where each particle is a probability measure was proposed by substituting averages with Wasserstein barycenters \cite{cariler2011bary}. 

In Gaussian settings, every particle is a Gaussian measure
\[
\mu_t^i = \mathcal{N}(m_t^i, \Sigma_t^i), \qquad \textup{for}\;\; i \in [N]\,,\;\; t\geq 0
\]
and, since the $L^2$-Wasserstein barycenter is still a Gaussian, the consensus dynamics of \cite{borghi2025wass}  reads
\begin{equation} \label{eq:bwcbo}
\begin{cases}
(\overline{m}_t, \overline{\Sigma}_t) := \textup{Barycenter}^{\omega}\left\{(m_t^i, \Sigma_t^i)\,\mid i \in [N]  \right \}  \\
\dot{m}_t^i = \overline{m}_t - m_t^i  \qquad i \in [N] \\
\dot{\Sigma}_t^i = \overline{\Sigma}_t(\Sigma_t^i \overline{\Sigma}_t)^{-\frac12} - I\, \qquad i \in [N]\,.
\end{cases}
\end{equation}

Above, the barycenter serves as the consensus point and is computed according to the Gibbs weight function
\begin{equation} \label{eq:weights}
\omega(\mu) := \exp\left(-\alpha \E(\mu) \right)\qquad \alpha \gg 1\,,
\end{equation}
and the tangent vector $\overline{\Sigma}_t(\Sigma_t^i \overline{\Sigma}_t)^{-\frac12} - I$ corresponds the optimal transport maps from $\Sigma_t^i$ to $\overline{\Sigma}_t$ (see \cref{sec:2}). 

Consensus dynamics in Wasserstein spaces have been also formulated in \cite{doucet2014,doucet2021,bullo2023} with aim of deriving a distributed algorithm for the computation of Wasserstein barycenters, and not in the context of optimization.

\paragraph{Contributions.} While coherent with the BW geometry, the particle system \eqref{eq:bwcbo} lacks stochasticity and is computationally expensive, making it unsuitable for solving the optimization problem \eqref{eq:pb}. To address this, 
\begin{enumerate}[label=\roman*)]
\item We propose the Linearized Bures--Wasserstein space (LBW), a novel parametrization of the space of Gaussian measures that enables efficient computation and the use of stochastic analysis, while retaining key features of the BW geometry. 

\item We design an efficient CBO-type dynamics in the LBW space to solve \eqref{eq:pb} (see Figure \ref{fig:illu} for an intuition). The particle method is gradient-free and only requires evaluations of the objective functional $\E$ (up to a  constant). We study the well-posedness of the system and its convergence towards minimizers via a mean-field approximation of the dynamics;

\item We validate the algorithm on various Gaussian VI test problems and investigate the role of different parameters. Comparisons with gradient-based methods demonstrate the superior performance of Gaussian CBO in non-convex \rev{low-dimensional} settings.
\end{enumerate}

\section{The Linearized Bures--Wasserstein Geometry}

\subsection{The Bures--Wasserstein manifold}

The space of non-degenerate Gaussian measures over $\Rd$ equipped with the BW Riemmanian metric corresponds to the $L^2$-Wasserstein distance \citep{takatsu2011,Malago2018,bathia2019}. 

Let $\pd\subset \sym$ denote the subset of positive definite symmetric matrices. At each point $(m,\Sigma) \in \Rd \times \pd$, the tangent space is given by $\Rd \times \sym$ with the usual Euclidean metric in $\Rd$, while $\sym$ is equipped with the scalar product
\[
\langle T, S \rangle_{\Sigma} := \tr(T\Sigma S)\,, \qquad S,T\in \sym\,.
\]
The Riemmanian exponential map is given by
\begin{equation} \label{eq:expBW}
\exp_\Sigma(T) := ( I + T) \Sigma(I + T)
\end{equation}
and the logarithmic map by $\log_\Sigma(\overline{\Sigma}) := \overline{\Sigma}(\Sigma \overline{\Sigma})^{-1/2} - I$ (see \cref{sec:2} for more details)

A critical aspect of the BW manifold, is that it is not geodesically complete, since the exponential map is well-defined only as long as $I + T$ is positive definitive. In practice this means that starting from $\Sigma$, if we follow a tangent direction $T$, we might hit the boundary of degenerate Gaussian measure where the Riemmanian geometry is not well-defined.
Therefore, naively adding noise to the deterministic particle system \eqref{eq:bwcbo} might result in ill-posed dynamics,
\rev{as random tangent perturbations may hit the boundary of singular covariance matrices, where the Riemannian geometry degenerates.}

We address this issue by linearizing the geometry. \rev{This allows us to work in a Hilbert space, where random perturbations are well-defined and barycenters are cheaper to compute.}

\subsection{LBW parametrization} 

\paragraph{Linearization.} Linear Optimal Transport (LOT) has recently gained popularity as a computationally efficient way of comparing probability measures while keeping some geometric features of the Wasserstein geometry \cite{kolouri2016continuous,moosmuller2023linear,sarrazin2024lot,craig2020lot}.

The LOT idea consists of fixing a reference measure $\mu^0 \in \mathcal{P}_2(\Rd)$ and to parametrize every other probability measure $\mu$ with the associated OT map $\mathcal{T}:\Rd \to \Rd$ from $\mu^0$ to $\mu$:
\[\mathcal{T} \mapsto  \mu := \mathcal{T}_{\#}\mu^0\,. \]
The linearized $L^2$-Wasserstein distance with base $\mu^0$ is then given by
\[\LW_{\mu^0}(\mu^1, \mu^2):= \|\mathcal{T}^1 - \mathcal{T}^2
\|_{L^2(\mu^0)}\]
where $\mathcal{T}^i$ is the OT maps associated with $\mu^i\in \mathcal{P}_2(\Rd)$.

We consider the same parametrisation of the space of Gaussian measures, but we also link it the Riemmanian structure of BW. Let $\mu^0 = \mathcal{N}(0, \Sigma^0), \Sigma^0 \in \pd$ be a reference measure, we parametrize each Gaussian $\mathcal{N}(m, \Sigma)$ with $(m, T), T\in \sym$ where
\[
T \mapsto \Sigma = \exp_{\Sigma^0}(T). \]

Due to the domain of definition of $\exp_{\Sigma}(\cdot)$, we note this parametrization is, to be precise,  well-defined only for $T$ such that $T + I \in \pd$. To avoid such restriction we simply set $\exp_{\Sigma^0}(T) = (I + T) \Sigma^0(I + T)$ for any $T\in \sym$ at the cost of losing the uniqueness of the parametrization. We refer to \cref{app:lbw} for more details on this technical aspect.

Therefore, instead of defining the particle system in the BW manifold, we will fix a reference measure $\mathcal{N}(0, \Sigma^0)$ and consider a particle system evolving in the tangent space $\Rd \times \sym$ with Linearized BW (LBW) geometry given by 
\[
\langle (m^1, T^1), (m^2, T^2)\rangle_{\LBW(\Sigma^0)} := \langle m^1, m^2\rangle + \langle T^1, T^2 \rangle_{\Sigma_0}\,. 
\]

\paragraph{Barycenters.} 
With respect to BW, we have the benefit of dealing with an unconstrained space, and so without the issue of losing the geometry when hitting the boundary. Also, the computations of barycenters, which we need for the definition of the consensus point, is sensibly cheaper.

Indeed, let $\{(m^i, T^)\}_{i=1}^N$ be a collection of LBW-parametrized Gaussian probability measures. The associated LBW barycenter with weights $\{\omega^i\}_{i=1}^N$, $\sum_i\omega^i = 1$ is simply given by 
$\mathcal{N}(\overline{m}, \overline{\Sigma})$ where $\overline{m} = \sum_{i} \omega^i m^i$ and
\begin{equation}\label{eq:LBWbary_main}
\overline{\Sigma}  = \exp_{\Sigma^0}(\overline{T})\,\quad \textup{with}\quad \overline{T} = \sum_{i=1}^N \omega^i T^i\,.
\end{equation}
On the contrary, computing BW barycenter would have required solving a matrix equation. See, again, \cref{app:lbw} for more details, and in particular \cref{fig:comparison} for a qualitatively comparison between the two notions of barycenters.

\begin{figure}[t]
\centering
    \centering
    \includegraphics[width=0.6\linewidth, trim = {2mm 3mm 0mm 6mm}, clip]{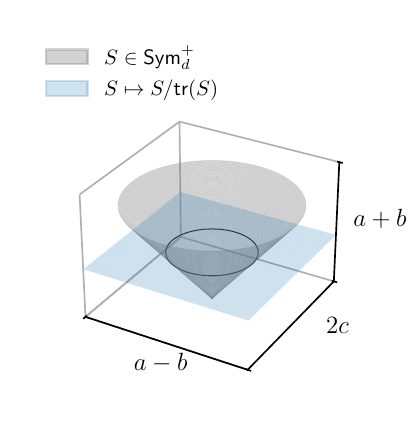}
\caption{To visualize $S\in \psd$, 
$S = ( a, c ; c, b)$, we first map it to $\R^3$ as $(a-b,2c,a+b)$. The cone $\psd$ is then given by $z \geq \sqrt{x^2 + y^2}$. The 2D plot is finally obtained by projection towards trace 1 matrices $S \mapsto S/\tr(S)$ (via $(x,y,z)\mapsto (x,y)/z$).
}
\label{fig:BM2}
\end{figure}

\begin{figure}[t]
\centering
\begin{subfigure}[t]{0.49\linewidth}
    \centering
    \includegraphics[width=\linewidth, trim = {0.5cm 1cm 8.2cm 0.5cm}, clip]{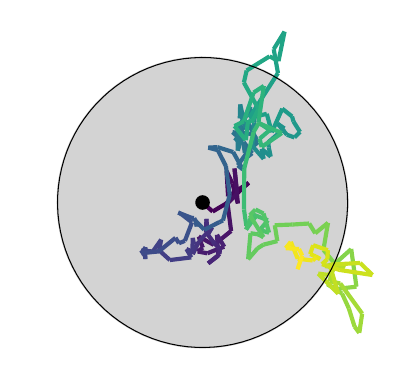}
    \caption{$I + B^T_t$}
\end{subfigure} \hfill
\begin{subfigure}[t]{0.49\linewidth}
    \centering
\includegraphics[width=\linewidth, trim = {8.2cm 1cm 0.5cm 0.5cm}, clip]{fig_random}
        \caption{$\Sigma_t = \exp_{\Sigma^0}(B^T_t)$}
\end{subfigure}
\caption{A Brownian path in LBW space. If $B^T_t$ is a Brownian particle in $\sym$, it may leave the cone of OT maps, see (a) (equivalently, $I + B^T_t$ leaves the cone of positive semi-definite matrices $\psd$). The extended exponential map \eqref{eq:expBW}, though, automatically reflects the dynamics so that $\Sigma_t= \exp_{\Sigma^0}(B^T_t)\in \psd$ without additional computational effort, see (b). 
}
\label{fig:BM1}
\end{figure}

\paragraph{Brownian processes in LBW} To promote exploration in the particle algorithm, we will require to add noise in the dynamics. In LBW, we can conveniently define Brownian processes thanks to the finite-dimensional Hilbert structure given by $\langle \cdot, \cdot \rangle_{\LBW(\Sigma^0)}$. 

Let $\{e_\ell\}_{\ell = 1}^{d(d+1)/2}$ be an basis for $\sym$ which is orthonormal with respect to $\langle\cdot, \cdot \rangle_{\Sigma^0}$, and 
$\{\xi_t^\ell\}_{\ell = 1}^{d(d+1)/2}$ be independent one-dimensional Brownian processes. A Brownian process $(B^{\LBW}_t)_{t\geq 0}$ in LBW is then given by 
\[
B_{t}^{\LBW} = (B_t^m, B_t^T)\,\quad \textup{with}\quad B_t^T = \sum_{\ell = 1}^{d(d+1)/2} e_\ell \xi_t^\ell\,,
\]
where $B_t^m$ is a $d$-dimensional Brownian process.

Remarkably, the associated Gaussian measure is a random process in $\mathcal{N}_d$ which may reach the boundary of degenerate Gaussian measures but the dynamics remains well-defined, see \cref{fig:BM1,fig:BM2} for an intuition in the $d = 2$ case.


\section{Gaussian consensus-based optimization}
\label{sec:cbo}

\subsection{The particle dynamics}

We are now ready to define the Consensus-Based Optimization (CBO) particle system in the LBW space. The $N$ Gaussian particles at time $t\geq 0$ are described tangent vectors at the reference measures $\mu^0 = \mathcal{N}(0,\Sigma^0), \Sigma^0 \in \pd$
\[
(m_t^i, T_t^i)\in \Rd\times \sym \qquad i = 1, \dots, N
\]
via the relation 
$ \mu_t^i = \mathcal{N}\left( m^i_t,\Sigma_t^i   \right)$, with $\Sigma_t = \exp_{\Sigma^0}(T^i_t)$.

A CBO-type dynamics \cite{pinnau2017consensus} is characterized by a deterministic component which drives particles towards the consensus point and a stochastic component favoring exploration of the search space.

\paragraph{Consensus point.} As in the deterministic consensus dynamics \eqref{eq:bwcbo}, the consensus point corresponds to a weighted barycenter, now computed according to the LBW geometry.  To stress the the dependence of the barycenter on the entire particle system, we introduce the empirical measure
$
\rho_t^N = (1/N)\sum_{i=1}^N \delta_{(m^i_t, T^i_t)} \in \mathcal{P}(\Rd \times \sym)\,.
$

The consensus point is then given by the weighted LBW barycenter \eqref{eq:LBWbary_main} with exponential weights \eqref{eq:weights}. For an arbitrary measure $\rho\in \mathcal{P}(\Rd\times \sym)$, it reads
\begin{equation} \label{eq:consensus}
\begin{split}
\overline{m}^\alpha[\rho] := \frac{\int m \,e^{-\alpha \E^\#(m, T)} \rho(\d m, \d T)}{\int e^{-\alpha \E^\#(m, T)} \rho(\d m, \d T)}\,,\\
 \overline{T}^\alpha[\rho] := \frac{\int T \,e^{-\alpha \E^\#(m, T)} \rho(\d m, \d T)}{\int e^{-\alpha \E^\#(m, T)} \rho(\d m, \d T)}
 \end{split}
\end{equation}
where, for notational simplicity, we introduced the finite-dimensional objective function 
\[
\E^\#(m,T) := \E(\mathcal{N}(m, \exp_{\Sigma^0}(T)))\,.\] 

 For the empirical measure $\rho_t^N$, the consensus point  $(\overline{m}^\alpha[\rho^N_t], \overline{T}^\alpha[\rho_t^N])$ reduces to a weighted sum of the particles, which, for $\alpha\gg 1$, is close to the best particles among the ensemble thanks to the Boltzmann--Gibbs exponential weights. 
Therefore, the consensus point can be considered a proxy for the best particle of the ensemble.

\paragraph{Evolution.}
Given two vectors $a,b\in \Rd$, we denote with $a\odot b \in \Rd$ the component-wise product, $(a\odot b)_{\ell} = a_\ell b_\ell$. We fix a basis $\bm{e} = \{e_\ell\}_{\ell = 1}^{d(d+1)/2}$ for $\sym$ orthonormal with the respect to $\langle \cdot, \cdot \rangle_{\Sigma^0}$. For $S,T\in \sym$, we define the component-wise product as
\[
T \odot S :=  \left(\sum_{\ell} T^{\bm{e}}_\ell e_\ell \right) \odot  \left(\sum_{\ell} S^{\bm{e}}_\ell e_\ell \right) = \sum_\ell T_{\ell}^{\bm{e}} S_{\ell}^{\bm{e}} e_\ell
\]
which corresponds to the component-wise product between the vectors of coefficients $(T_{1}^{\bm{e}},\dots, T^{\bm{e}}_{d(d+1)/2})$, $(S_{1}^{\bm{e}},\dots, S^{\bm{e}}_{d(d+1)/2})$. 

Let $B_t^i = (B^{i,m}_t, B_t^{i,T})$ be $N$ independent Brownian processes taking values in $\Rd \times \sym$ constructed with the basis $\bm{e}$. 
The CBO dynamics in LBW space reads for $i \in [N]$
\begin{equation} \label{eq:cboLBW}
\begin{dcases}
\d m^i_t  &= \lambda(\overline{m}^\alpha[\rho^N_t] - m^i_t)\d t \\
& \hspace{2cm} + \sigma (\overline{m}^\alpha[\rho^N_t] - m^i_t)\odot \d B_t^{i,m} \\
\d T^i_t &= \lambda(\overline{T}^\alpha[\rho^N_t] - T^i_t)\d t \\
& \hspace{2cm}  + \sigma(\overline{T}^\alpha[\rho^N_t] - T^i_t)\odot \d B_t^{i,T}
\end{dcases}
\end{equation}
with $(m_0^i, T_0^i)\sim \rho_0$, i.i.d, for some $\rho_0 \in \mathcal{P}(\Rd\times \sym)$.

Above, $\lambda, \sigma>0$ are two parameters controlling the strength of the deterministic and stochastic components respectively. We note that noise is possibly degenerate as it depends on the differences $(\overline{m}^\alpha[\rho^N_t] - m^i_t)$ and $(\overline{T}^\alpha[\rho^N_t] - T^i_t) $. In this way, particles which are far from the consensus point tend to have a more explorative behavior than those close to it. 

This is an essential mechanism for ensuring emergence of consensus as the particles evolve. Also, the diffusion is anisotropic since each direction is explored at a different rate. This strategy has been proposed in \cite{carrillo2021consensus} for superior performance in high-dimensional problems.

It is important to remark that, thanks to \eqref{eq:consensus}-\eqref{eq:cboLBW} and the definition of $\odot$, the CBO dynamics in LBW corresponds to the standard Euclidean CBO particle system if we identify each $T^i_t$ with its coefficients associated with the basis $\bm{e} = \{e_\ell\}_{\ell = 1}^{d(d+1)/2}$. Therefore, \eqref{eq:cboLBW} corresponds to a CBO dynamics in $\R^D$ with $D = d + d(d+1)/2$, and we can rely on the standard well-posedness results for CBO particle systems \cite{carrillo2021consensus, carrillo2018analytical}.

\subsection{Well-posedness}

For completeness, we recall in the following the well-posedness results  and translate the assumption on the (finite-dimensional) objective function $\E^\# = \E^\#(m,T)$ into assumptions on the functional $\E = \E(\mu)$ for better interpretability. The proof of each result is collected in \cref{app:proofs:well}. Recall $\LW_{\mu^0}$  is the LOT distance, while we denote with simply $\W$ the $L^2$-Wasserstein distance.

\begin{lemma} \label{l:well}
Let $\Sigma^0 \in \pd$ and $\mu^0 = \mathcal{N}(0, \Sigma^0)$. Assume $\mathbb{E}|m_0^i|^2,\mathbb{E}\|T_0^i\|_{\Sigma^0}^2 <0$, and that, for some $L_\E>0$ it holds for any $\mu^1, \mu^2 \in \mathcal{P}_2(\Rd)$
\begin{multline} \label{asm:lip}
|\E(\mu^1) - \E(\mu^2)| \leq L_\E \left(1 + M_2(\mu^1) + M_2(\mu^2)\right) \\\times  \LW_{\mu^0}(\mu^1, \mu^2)\,.
\end{multline}
Then, system \eqref{eq:cboLBW} admits a unique strong solution.
\end{lemma}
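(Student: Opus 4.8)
The plan is to reduce the well-posedness of \eqref{eq:cboLBW} to the standard existence-and-uniqueness theory for Euclidean CBO systems, as indicated in the paragraph preceding the statement, and the only real work is to verify that the hypothesis \eqref{asm:lip} on $\E$ translates into the local-Lipschitz-plus-linear-growth condition on the finite-dimensional objective $\E^\#(m,T) = \E(\mathcal{N}(m,\exp_{\Sigma^0}(T)))$ that those results require. First I would fix the orthonormal basis $\bm e = \{e_\ell\}$ of $\sym$ and identify $T \leftrightarrow (T^{\bm e}_1,\dots,T^{\bm e}_{d(d+1)/2}) \in \R^{d(d+1)/2}$, so that \eqref{eq:cboLBW} literally becomes an anisotropic CBO system in $\R^D$, $D = d + d(d+1)/2$, with state $(m,T^{\bm e})$, driven by the standard Brownian motion $(B^{i,m}, (\xi^{i,\ell})_\ell)$; under this identification $\|T\|_{\Sigma^0}$ is the Euclidean norm of the coefficient vector, and the second-moment assumption on the initial data becomes $\mathbb{E}|(m_0^i,T_0^{i,\bm e})|^2 < \infty$ (I note in passing that the statement's "$<0$" is a typo for "$<\infty$").

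The core estimate is then: show that \eqref{asm:lip} implies there is a constant $C>0$ with
\[
|\E^\#(m^1,T^1) - \E^\#(m^2,T^2)| \le C\bigl(1 + |m^1| + |m^2| + \|T^1\|_{\Sigma^0} + \|T^2\|_{\Sigma^0}\bigr)^{k}\,\|(m^1,T^1)-(m^2,T^2)\|_{\textup{LBW}(\Sigma^0)}
\]
for some fixed power $k$. To get this I would apply \eqref{asm:lip} with $\mu^j = \mathcal{N}(m^j,\exp_{\Sigma^0}(T^j))$ and control the two factors on the right-hand side separately. For the discrepancy factor, I already have from the computation in Section \ref{sec:lbw} the exact identity $\LW_{\mu^0}(\mu^1,\mu^2)^2 = |m^1-m^2|^2 + \|T^1 - T^2\|_{\Sigma^0}^2$ — wait, that identity is for the linearization around $\mu^0$ when $\Sigma^j = \exp_{\Sigma^0}(T^j)$, which is exactly our situation, so $\LW_{\mu^0}(\mu^1,\mu^2) = \|(m^1,T^1)-(m^2,T^2)\|_{\textup{LBW}(\Sigma^0)}$ directly. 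For the growth factor I need a bound $M_2(\mathcal{N}(m,\exp_{\Sigma^0}(T))) = (|m|^2 + \tr((I+T)\Sigma^0(I+T)))^{1/2} \le C(1 + |m| + \|T\|_{\Sigma^0})^{?}$; since $\tr((I+T)\Sigma^0(I+T))$ is a quadratic polynomial in the entries of $T$ and $\|\cdot\|_{\Sigma^0}$ is equivalent to the Frobenius norm on $\sym$ (the equivalence constants depending only on $\Sigma^0$), this gives $M_2(\mu) \le C(1 + |m| + \|T\|_{\Sigma^0})$ with $C = C(\Sigma^0)$. Combining, $\E^\#$ is locally Lipschitz with at-most-linear growth of the local Lipschitz constant, i.e. $k=1$ suffices.

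With this estimate in hand, I would invoke the well-posedness theorem for CBO in $\R^D$ — e.g. \cite[Theorem 2.1]{carrillo2018analytical} or the anisotropic version in \cite{carrillo2021consensus} — whose hypotheses are precisely: boundedness below is not needed, $\E^\#$ continuous, and the above local-Lipschitz/linear-growth bound, together with $\mathbb{E}|(m_0^i,T_0^{i,\bm e})|^2<\infty$ of the i.i.d.\ initial data. That theorem yields a unique strong solution to the $\R^D$ system, which under the basis identification is exactly the unique strong solution of \eqref{eq:cboLBW}; the bijective linear change of coordinates between $(T^i_t)$ and its coefficient vector preserves both existence and pathwise uniqueness.

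The main obstacle, and the only point requiring care, is the growth-factor bound: one must make sure that feeding the quadratic-in-$T$ quantity $M_2(\mu^j)$ into \eqref{asm:lip} does not produce a super-linear Lipschitz constant that exceeds what the CBO theory allows — but since $M_2$ enters \eqref{asm:lip} only linearly and $\LW_{\mu^0}$ contributes the genuine metric distance, the product stays within the admissible "polynomial-growth local Lipschitz" class, so this is routine once one writes down the norm-equivalence constants on $\sym$ explicitly. A secondary subtlety is that the noise in \eqref{eq:cboLBW} is degenerate (it vanishes when a particle meets the consensus point), but this is already built into the cited CBO well-posedness results, since the diffusion coefficient $\sigma(\overline{m}^\alpha[\rho^N_t]-m^i_t)\odot(\cdot)$ is itself globally Lipschitz in the state with linear growth, so no extra argument is needed.
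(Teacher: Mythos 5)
Your proposal is correct and follows essentially the same route as the paper's proof: translate \eqref{asm:lip} into a local Lipschitz condition on $\E^\#$ using the identity $\LW_{\mu^0}(\mu^1,\mu^2)=\|(m^1,T^1)-(m^2,T^2)\|_{\textup{LBW}(\Sigma^0)}$ together with the equivalence $M_2(\mu)^2=|m|^2+\|I+T\|^2_{\Sigma^0}$, then identify $\Rd\times\sym$ with $\R^D$ via the orthonormal basis and invoke the standard CBO well-posedness result of \cite[Theorem 2.1]{carrillo2018analytical}. The extra details you spell out (the typo $<0$ versus $<\infty$, norm equivalence on $\sym$, and the degenerate noise being covered by the cited theorems) are consistent with, and implicit in, the paper's argument.
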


Let us discuss what type of functional $\E$ satisfy condition \eqref{asm:lip}.
For energy functionals of type 
\begin{equation}\label{eq:VW}
 \mathcal{V}(\mu) = \!\!\int \!V(x)\mu(\d x), \, \mathcal{W}(\mu) = \!\!\iint \!W(x,y)\mu(\d x)\mu(\d y)\,
\end{equation}
a sufficient condition for \eqref{asm:lip} is the local Lipschitz continuity of $V$ and $W$. More delicate is the case of the log-entropy (relevant for the KL divergence) 
\[\mathcal{U}(\mu)  = \begin{cases}
 \int \log(\mu(x)) \mu(\d x) & \textup{if} \;\; \mu \ll \mathcal{L}^d \\
 + \infty  &\textup{otherwise}
 \end{cases}
 \]
as it takes the infinite value for singular Gaussian measures. Though, $\U$ satisfies a local Lipschitz bound for the $L^2$-Wasserstein distance $\W$ (which is stronger than a bound for $\LW_{\mu^0}$)  under a regularity condition (see \citep{polyanskiy2016wasserstein}, Proposition 1, recalled in \cref{app:proofs})

 Let $\clip_{\ve}: \psd \to  \{\Sigma \in \pd: \Sigma \succcurlyeq \ve I\}$ be the function which clips the eigenvalues to a minimum value $\ve>0$ defined by $\textup{clip}_\ve(\Sigma) := \sum_{\ell} \max\{\lambda_\ell, \ve\} u_\ell u_\ell^\top $ for $\Sigma = \sum_{\ell} \lambda_\ell u_\ell u_\ell^\top$, 
where $(\lambda_\ell, u_\ell)_\ell$ is an eigenbasis for $\Sigma$, as in \cite{lambert2022variational}. Then, we may now regularize the entropy functional by setting for $\mu = \mathcal{N}(m, \Sigma)$
\begin{equation} \label{eq:clip}
\U_\ve(\mu) : = \U\left( \mathcal{N}(m, \clip_\ve(\Sigma))\right)\, .
\end{equation}

This modification does not affect the location of the minimizers for $\ve \ll 1$ and ensures well-posedness of the particle system \eqref{eq:cboLBW}. We also have the following result.

\begin{lemma} \label{l:KLregularized}
Let $\mu^\targ \propto \exp(-V)$, with $V$ such that 
$
|V(x) - V(y)| \leq L_V(1 + |x| + |y|)|x - y|$
for any $x,y \in \Rd$. 

 The regularized divergence
$\KL_\ve(\cdot | \mu^\targ): \mathcal{N}^d \mapsto [0, \infty)$,  
\[\KL_\ve(\mu | \mu^\targ)= \mathcal{V}(\mu) + \mathcal{U}_\ve(\mu)\]
satisfies the Lipschitz condition \eqref{asm:lip}.
\end{lemma}
We remark that the clipping is only necessary for the well-posedness of the time-continuous dynamics, and that, in practice, one can implement the algorithm by simply truncating the value of $\E$.

More general internal energies can also be considered provided they satisfy \eqref{asm:lip}. For instance, in \cref{app:proofs:well} we also study the case of Maximum Mean Discrepancy (MMD).

\subsection{Mean-field analysis}

Mean-field approximations of interacting particle systems are a powerful tool to study their long time behavior. For CBO methods, the mean-field analysis allows to investigate the effectiveness of the algorithm by studying its convergence towards global minimizers \cite{carrillo2018analytical,carrillo2021consensus,fornasier2024consensusbased}.

We show now that the same type of analysis also applies to the Gaussian CBO particle system \eqref{eq:cboLBW}. As for Lemma \ref{l:well}, we can rely entirely on the results available in the literature for standard CBO in $\R^D$ thanks to the finite-dimensional and Euclidean nature of $\Rd \times \sym$ equipped with the LBW product. We translate, when possible, the assumptions on $\E^\#$ into assumptions on $\E$.

\paragraph{Chaos propagation.}
The mean-field approximation of the particle system \eqref{eq:cboLBW} can be formally derived by assuming propagation of chaos of the particle system \cite{sznitman1991topics}. Let $F_t \in \mathcal{P}\left((\Rd\times \sym)^N\right)$ be the particles joint probability measure. If $F_0 = \rho_0^{\otimes N}$, we assume that for large particle systems $N\gg1$ the distribution at $t\geq 0$ can be approximated as $F_t \approx \rho_t^{\otimes N}$ for some $\rho_t \in \mathcal{P}(\Rd \times \sym)$.

This means that the particles are i.i.d also at subsequent times $t\geq0$, and that each of them evolves according to the McKean--Vlasov process
\begin{equation} \label{eq:mf}
\begin{dcases}
\d m_t \hspace{-3mm}&= \lambda(\overline{m}^\alpha[\rho_t] - m_t)\d t + \sigma (\overline{m}^\alpha[\rho_t] - m_t)\odot \d B_t^{m} \\
\d T_t \hspace{-3mm}&= \lambda(\overline{T}^\alpha[\rho_t] - T_t)\d t + \sigma(\overline{T}^\alpha[\rho_t] - T_t)\odot \d B_t^{T} \\
\rho_t \hspace{-3mm}&= \law(m_t, T_t)\,.
\end{dcases}
\end{equation}
Note that the average mean $\overline{m}^\alpha[\rho_t^N]$ is  substituted above by $\overline{m}^\alpha[\rho_t]$, which depends on the own particle law $\rho_t$.

\paragraph{Well-posedness.} We refer to \cref{app:proofs:mf} for the proofs.
\begin{assumption} \label{asm:mf}
The objective functional $\E$ is bounded from below over $\mathcal{N}^d$, $\underline \E : = \inf_{\mu\in \mathcal{N}^d}\E(\mu)$ and locally Lipschitz continuous \eqref{asm:lip}. Furthermore, either $\E$ is bounded from above, $\sup_{\mu\in \mathcal{N}^d}\E(\mu)< \infty$, or it grows quadratically at infinity:
\[
\E(\mu) - \underline \E \geq c_l M_2(\mu)^2 \qquad \textup{for}\quad \mu,\; \;M_2(\mu) \geq R\,,
\]
for some constants $R,c_l>0$.
\end{assumption}

\begin{lemma} \label{l:wellmf}
Let $\mu^0 = \mathcal{N}(0, \Sigma^0), \Sigma^0 \in \pd$, $\E$ satisfy Assumption \ref{asm:mf}, and $\rho_0 \in \mathcal{P}_4(\Rd\times \sym)$. Then, there exists a unique non-linear process $(m,T)\in \mathcal{C}([0,\infty), \Rd \times \sym)$ satisfying \eqref{eq:mf} in a strong sense with $\lim_{t\to 0}\rho_t = \rho_0\in \mathcal{P}_2(\Rd\times \sym)$.
\end{lemma}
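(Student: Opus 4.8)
The plan is to reduce the statement entirely to the known mean-field well-posedness theory for Euclidean CBO, exactly as the proof of Lemma \ref{l:well} did for the finite particle system. Fix the orthonormal basis $\bm{e} = \{e_\ell\}_{\ell=1}^{d(d+1)/2}$ of $\sym$ with respect to $\langle\cdot,\cdot\rangle_{\Sigma^0}$ and identify $(m,T)\in\Rd\times\sym$ with its coordinate vector in $\R^D$, $D = d + d(d+1)/2$. Under this identification the LBW product \eqref{eq:LBW} becomes the standard Euclidean inner product, the component-wise product $\odot$ becomes the usual coordinate-wise product of vectors in $\R^D$, and the process $B_t = (B_t^m, B_t^T)$ built in \eqref{eq:BM} becomes a standard $D$-dimensional Brownian motion. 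Hence \eqref{eq:mf} is literally the anisotropic CBO McKean--Vlasov dynamics in $\R^D$ with objective $\E^\#(m,T) = \E(\mathcal{N}(m,\exp_{\Sigma^0}(T)))$ and consensus point \eqref{eq:consensus}, and $\rho_0 \in \mathcal{P}_4(\Rd\times\sym)$ transports to a measure in $\mathcal{P}_4(\R^D)$.

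Next I would check that Assumption \ref{asm:mf} on $\E$ translates into the hypotheses required by the mean-field existence-and-uniqueness results for CBO (e.g. the well-posedness theorem in \cite{carrillo2018analytical}, in the anisotropic form of \cite{carrillo2021consensus}). Local Lipschitz continuity of $\E^\#$ with the prefactor $(1 + \|(m^1,T^1)\|_{\textup{LBW}(\Sigma^0)} + \|(m^2,T^2)\|_{\textup{LBW}(\Sigma^0)})$ is equivalent to \eqref{asm:lip}, as already verified in the proof of Lemma \ref{l:well} using $\LW_{\mu^0}(\mu^1,\mu^2) = \|(m^1,T^1) - (m^2,T^2)\|_{\textup{LBW}(\Sigma^0)}$. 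Boundedness from below of $\E^\#$ is immediate from that of $\E$. For the growth/confinement hypothesis, \eqref{eq:moments} gives $M_2(\mu)^2 = |m|^2 + \|I+T\|_{\Sigma^0}^2$, so $M_2(\mu)$ and $\|(m,T)\|_{\textup{LBW}(\Sigma^0)}$ differ only by additive constants; hence either boundedness from above of $\E$, or its quadratic growth in $M_2(\mu)$, yields the corresponding property of $\E^\#$ in the coordinate variable. With all hypotheses met, the cited theorem gives a unique strong solution $(m_t,T_t)$ with continuous paths on $[0,\infty)$, and the fourth-moment estimate in its proof yields $\sup_{t\in[0,T]}\mathbb{E}\,\|(m_t,T_t)\|_{\textup{LBW}(\Sigma^0)}^4 < \infty$ for every $T$; in particular $\rho_t = \law(m_t,T_t)\in \mathcal{P}_2$ with $\rho_t \to \rho_0$ in $\mathbb{W}$ as $t\to 0$.

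The main obstacle is the one already handled in the CBO literature: one must show the consensus maps $\rho \mapsto (\overline{m}^\alpha[\rho], \overline{T}^\alpha[\rho])$ are well-defined and Lipschitz-stable in the Wasserstein metric on $\mathcal{P}_4(\R^D)$, which requires the normalizing denominator $\int e^{-\alpha\E^\#}\,\d\rho$ to stay bounded away from zero along the flow. This is precisely where the growth part of Assumption \ref{asm:mf} enters, guaranteeing integrability of $e^{-\alpha\E^\#}$ against measures with controlled second moments and confining the Laplace-type weights to a bounded region; combined with the moment propagation needed to close the Gronwall/fixed-point argument, this is the technical core. Since all of it is carried out in \cite{carrillo2018analytical,carrillo2021consensus} for a generic locally Lipschitz objective on $\R^D$ satisfying Assumption \ref{asm:mf}, it transfers verbatim after the identification above. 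The only point requiring care is that the noise in \eqref{eq:mf} is the anisotropic, possibly degenerate one, $(\overline{T}^\alpha[\rho_t] - T_t)\odot\,\d B_t^T$ rather than $\|\overline{T}^\alpha[\rho_t] - T_t\|\,\d B_t^T$, so one should invoke the anisotropic variant of the result, as in \cite{carrillo2021consensus}.
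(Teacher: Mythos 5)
Your proposal is correct and follows essentially the same route as the paper: identify $\Rd\times\sym$ with $\R^D$ via the $\langle\cdot,\cdot\rangle_{\Sigma^0}$-orthonormal basis, translate Assumption \ref{asm:mf} into the lower bound, local Lipschitz/quadratic upper bound, and quadratic growth (or boundedness) hypotheses using \eqref{eq:moments}, and invoke the anisotropic mean-field well-posedness theorem of \cite{carrillo2021consensus}. The only detail the paper adds that you gloss over is the remark that the cited assumption there has Lipschitz prefactor $(\|z_1\|+\|z_2\|)$ rather than $(1+\|z_1\|+\|z_2\|)$, a harmless modification that does not affect the proof.
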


We have already discussed under which conditions the functionals $\mathcal{V}, \mathcal{W}, \mathcal{U}$ satisfy the local Lipschitz continuity \eqref{asm:lip}. Lower bound for $\mathcal{V}, \mathcal{W}$ follows from lower bound the $V$ and $W$ respectively. Clearly, if $\inf V, \inf W>-\infty$, then $\inf \mathcal{V}, \inf \mathcal{W}>-\infty.$ Also, if $V, W$ grow quadratically at infinity, so does $\mathcal{V}, \mathcal{W}$ (see \cref{l:growth}).

For $\mathcal{U}$, we cannot expect quadratic growth at infinity, nor boundedness. Therefore, only the bounded, clipped version $\mathcal{U}_\ve$ \eqref{eq:clip} satisfies Assumption \ref{asm:mf} and, as a consequence, the same holds for the regularized divergence $\KL_\ve$ from a target $\mu^\targ \propto \exp(-V)$. 

In the case of CBO-type dynamics, the propagation of chaos assumption can be actually substituted by rigorous mean-field limit results.  
This justifies the study the model \eqref{eq:mf} to understand the algorithm convergence properties, see \cite{gerber2025uniform} for more details and updated references. 

\subsection{Convergence towards global minima}

 For $\alpha > 0$, recall the particle weights are given by $\omega(\mu) = \exp(-\alpha\E(\mu))$, and let us consider the corresponding finite-dimensional ones $\omega^\#(z) := \exp(-\alpha \E^\#(z))$ for $z\in \Rd\times \sym$.
The cornerstone of the convergence analysis of CBO methods is the Laplace principle \cite{dembo2010} which states that for a compactly supported $\rho\in \mathcal{P}(\Rd\times \sym)$, it holds
\begin{equation} \label{eq:laplace}
\lim_{\alpha \to \infty} -\frac1\alpha  \log \int  \exp(- \alpha \E^{\#}(z))\rho(\d z)= \inf_{z\in \supp({\rho})} \E^{\#}(z) \,.
\end{equation}

For a quantitative version in terms of consensus points, see Proposition 4.5 in \citep{fornasier2024consensusbased}.  In the following, we denote with $\var(\rho)$ the variance of a probability measure $\rho$: $\var(\rho) = (1/2)\int\| z_1 - z_1\|^2 \rho(\d z_1)\rho(\d z_2)$

We recall the convergence result from \citep{carrillo2021consensus} (Assumption 3.1, Theorem 3.2) applied to the finite dimensional setting in the space $\Rd\times \sym$.

\begin{theorem} \label{t:convergence} Assume $\underline{\E} := \inf \E^\# > -\infty$, $\E^\#\in \mathcal{C}^2(\Rd\times \sym)$ with bounded second derivatives, that is, for an orthonormal basis $\{e_\ell\}_{\ell = 1}^{D}$, $D = d + d(d+1)/2$ there exists $c_\E$ such that  $\max _\ell \max_{z}\left | \partial^2 \E^{\#}(z)/\partial e_\ell^2 \right| < c_\E\,.$

If $\alpha, \lambda, \sigma$ and the initial distribution $\rho_0$ is chosen such that $\argmin\,\E^\#(\mu) \subset \supp(\rho_0)$ and
\begin{align*}
C_1 &:= 2\lambda - \sigma^2 - 2\sigma^2 \frac{e^{-\alpha \underline{\E}}}{\| \omega^\#\|_{L^2(\rho_0)}} > 0\,, \\
C_2&:= \frac{2 \var(\rho_0)} {C_1 \| \omega^\#\|_{L^2(\rho_0)}} \alpha e^{-\alpha \underline{\E} } c_\E (2\lambda + \sigma^2) \leq \frac34\,,
\end{align*}
then $\var(\rho_t) \to 0$ exponentially fast and there exists $\tilde z$ such that the consensus point and $\int z \rho_t(\d z)$ converge to $\tilde z$ exponentially fast.
Moreover, it holds that 
\[
\E^\#(\tilde z) \leq \underline{\E}+ r(\alpha)  + \frac{\log 2}\alpha\,,
\]
where $r(\alpha):= - (1/\alpha) \log \| \omega^{\#}\|_{L^2(\rho_0)} - \underline{\E} \to 0$ as $\alpha \to \infty$ by the Laplace principle \eqref{eq:laplace}.
\end{theorem}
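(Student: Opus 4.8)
The plan is to exploit, once more, that the LBW space is a \emph{flat} finite-dimensional Euclidean space, so that the statement reduces verbatim to the known convergence theorem for anisotropic consensus-based optimization in $\R^D$. Concretely, I would proceed in three steps: (i) fix the orthonormal basis $\bm{e}=\{e_\ell\}_{\ell=1}^{d(d+1)/2}$ and identify $(\Rd\times\sym,\;\langle\cdot,\cdot\rangle+\langle\cdot,\cdot\rangle_{\Sigma^0})$ isometrically with $\R^D$, $D=d+d(d+1)/2$, carrying the standard inner product; (ii) check that, under this identification, the mean-field dynamics \eqref{eq:mf} is literally the anisotropic CBO McKean--Vlasov equation of \cite{carrillo2021consensus} with objective $\E^\#$; and (iii) verify the hypotheses of \cite[Theorem 3.2]{carrillo2021consensus} and transcribe its conclusion.

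For steps (i)--(ii): identifying $T\in\sym$ with its coordinate vector $(T_1^{\bm{e}},\dots,T_{d(d+1)/2}^{\bm{e}})$ and stacking with the $m$-coordinates turns the LBW inner product into the standard Euclidean one and the LBW norm into the Euclidean norm; by the very definition of $\odot$ (recalled just before \eqref{eq:cboLBW}) the componentwise product on $\Rd\times\sym$ becomes the componentwise product on $\R^D$; by construction \eqref{eq:BM} the process $B_t=(B_t^m,B_t^T)$ becomes a standard $D$-dimensional Brownian motion; and the consensus point \eqref{eq:consensus} becomes the weighted mean $\overline z^\alpha[\rho]=\int z\,e^{-\alpha\E^\#(z)}\rho(\d z)\big/\int e^{-\alpha\E^\#(z)}\rho(\d z)$. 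Hence \eqref{eq:mf} is exactly the system analyzed in \cite{carrillo2021consensus}, with the same parameters $\lambda,\sigma,\alpha$.

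For step (iii): the lower bound $\underline\E=\inf\E^\#>-\infty$ and the $\mathcal{C}^2$ bound on the second derivatives along the basis directions are assumed; I would note that the latter also yields a linear bound on $\nabla\E^\#$, hence local Lipschitz continuity and at most quadratic growth of $\E^\#$, which together with the lower bound is (more than) enough for the well-posedness and convergence machinery of \cite{carrillo2021consensus} to apply (cf. Lemma \ref{l:wellmf}). The conditions $\argmin\E\subset\supp(\rho_0)$, $C_1>0$ and $C_2\le\tfrac34$ are precisely the standing assumptions of \cite[Theorem 3.2]{carrillo2021consensus}, so that theorem gives $\var(\rho_t)\to 0$ exponentially fast, convergence of both the consensus point $\overline z^\alpha[\rho_t]$ and the mean $\int z\,\rho_t(\d z)$ to a common limit $\tilde z$ at an exponential rate, and the estimate $\E^\#(\tilde z)\le\underline\E+r(\alpha)+\alpha^{-1}\log 2$ with $r(\alpha)=-\alpha^{-1}\log\|\omega^\#\|_{L^2(\rho_0)}-\underline\E$. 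The final point is to argue $r(\alpha)\to 0$: since $\|\omega^\#\|_{L^2(\rho_0)}^2=\int e^{-2\alpha\E^\#}\d\rho_0$, the Laplace principle \eqref{eq:laplace} applied to $\rho_0$, together with the fact that $\argmin\E\subset\supp(\rho_0)$ forces $\inf_{\supp\rho_0}\E^\#=\underline\E$, gives the claim.

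I do not expect a genuine obstacle: the heavy lifting is done in \cite{carrillo2021consensus}, and the role of this argument is to make the reduction airtight. The only points needing care are the faithful translation of the (possibly degenerate, anisotropic) noise term under the change of basis, so that $\odot$ and the Brownian motion in \eqref{eq:mf} match the Euclidean convention used in \cite{carrillo2021consensus}, and the applicability of the Laplace principle, which may require either assuming $\rho_0$ compactly supported or a short truncation argument; a ready quantitative substitute is \cite[Proposition 4.5]{fornasier2024consensusbased}.
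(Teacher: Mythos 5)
Your reduction is exactly the paper's argument: the theorem is stated there as a direct transcription of \cite[Assumption 3.1, Theorem 3.2]{carrillo2021consensus} after identifying $(\Rd\times\sym,\|\cdot\|_{\textup{LBW}(\Sigma^0)})$ isometrically with $\R^D$ via the orthonormal basis, under which the dynamics \eqref{eq:mf}, the $\odot$ noise structure, and the consensus point \eqref{eq:consensus} become the standard anisotropic CBO objects for the objective $\E^\#$. Your added care about the Laplace principle (compact support or a truncation) matches the paper's own caveat preceding \eqref{eq:laplace}, so the proposal is correct and essentially identical in approach.
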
 

\begin{remark}
We note that parameters $\lambda, \sigma, \alpha$ and $\rho_0$ can always be picked to satisfy the Theorem's assumption by choosing, for $C_1$, $\sigma$ sufficiently small, and for $C_2$, $\var(\rho_0)$ sufficiently small.
    \rev{The two key aspects in the assumptions are that $\sigma^2\lesssim 2\lambda$ and that $\textup{Var}(\rho_0)$ should be small. The first condition comes from an intrinsic property of the particle dynamics and says that, for consensus emergence to occur, the diffusion parameter $\sigma$ should not be too large.
   
    The requirement that $\textup{Var}(\rho_0)$ is small, instead, is more related to the variance-based proof strategy than to the particle dynamics itself. Indeed, such a restriction is not present when using the different analysis strategy proposed in \cite{fornasier2024consensusbased} (see \cref{rmk:fornasier} for more details). Finally, we remark that the result holds at the mean-field level, and that the accuracy of the mean-field approximation typically deteriorates for $\alpha \gg 1$ \cite{gerber2025uniform}, showing a trade-off between accuracy and computational cost of the algorithm. }
\end{remark}

For $\mathcal{V}$ and $\mathcal{W}$, differentiability with respect to the mean follows directly from that of $V$ and $W$.  Precisely, we have that the assumptions are verified provided $W \in \mathcal{C}^4(\mathbb{R}^d\times \Rd)$, with bounded second- and fourth-order derivatives (see \cref{app:proofs:convergence}).

The log-entropy $\mathcal{U}$ is invariant under mean shifts, so $\nabla_m \mathcal{U}(\mu) = 0$.  
For $\Sigma \in \pd$, see Appendix A.1 in \citep{lambert2022variational},  it holds
$\nabla_\Sigma \mathcal{U}(\mu) = -(1/2)\,\Sigma^{-1}$.
Moreover, let $e_\ell$ be the $\ell$-th canonical basis vector of $\mathbb{R}^d$, we have, see \cite{giles2008matrix}, 
\[\frac{\partial}{\partial \Sigma_{ij}} \Sigma^{-1} = -\Sigma^{-1} e_i e_j^{\top} \Sigma^{-1}\,.\]

Therefore, the boundedness assumptions in Theorem \ref{t:convergence} on the second derivatives, hold only on if we stay far away from singular Gaussian measures, in a subset $\Sigma \succ \ve\,I, \ve >0$.

In practice, applying a smooth eigenvalue-clipping procedure to $\Sigma$, as done in \eqref{eq:clip}, ensures the covariance remains in this admissible set, which in turn guarantees convergence towards global minimizers.  \rev{This regularization affects the objective functional only near singular covariance matrices. Therefore, if the minimizer of the original energy has a non-degenerate covariance, the regularization should not affect the optimization problem.}
A detailed analysis of such regularization is beyond the scope of this work.

\section{Experiments}
\label{sec:num}

\renewcommand{\algorithmicfor}{\textbf{for (parallel)}}

\begin{algorithm}[tb]
  \caption{Gaussian Consensus-Based Optimization}
  \label{alg:gcbo}
  \begin{algorithmic}
    \STATE {\bfseries Input:} Objective function $\mathcal{E}$, reference $\mu^0 = \mathcal{N}(0, \Sigma^0)$ 
    \STATE \hspace{10mm} parameters $\lambda = 1, \sigma,\Delta t >0, \alpha \gg1, N \in \mathbb{N}$
    \STATE  Initialize particles $(m^i, T^i)\in \Rd\times \sym, i \in [N]$
    \STATE Evaluate objective $\E(\mu^i)$ with $\mu^i = \mathcal{N}(m^i, \exp_{\Sigma^0}(T^i))$
    \REPEAT
    \STATE Set particle weights $\omega^i \propto \exp(-\alpha \E(\mu^i))$
    \STATE Compute consensus  $(\overline{m}^\alpha, \overline{T}^\alpha)$ with $\{\omega^i\}_{i = 1}^N$
    \FOR{$i=1$ {\bfseries to} $N$}
    \STATE Sample random normal vectors $(B^{i,m}, B^{i,T})$ 
    \STATE Update particle $(m^i, T^i)$ with \eqref{eq:cbonum}
    \STATE Update objective value $\E(\mu^i)$
    \ENDFOR
    \UNTIL{convergence reached}
    \STATE {\bfseries Output:} Consensus Gaussian $\mathcal{N}(\overline{m}^\alpha, \exp_{\Sigma^0}(\overline{T}^\alpha))$
  \end{algorithmic}
\end{algorithm}

\renewcommand\thesubfigure{\Alph{subfigure}} 

\begin{figure*}
\centering
\includegraphics[width = 0.48\linewidth]{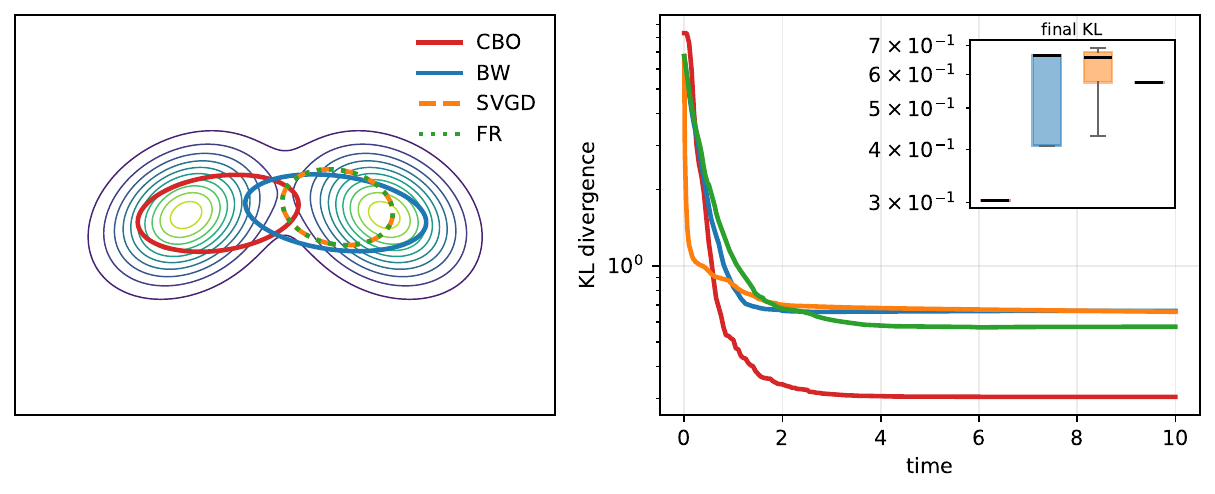}
\includegraphics[width = 0.48\linewidth]{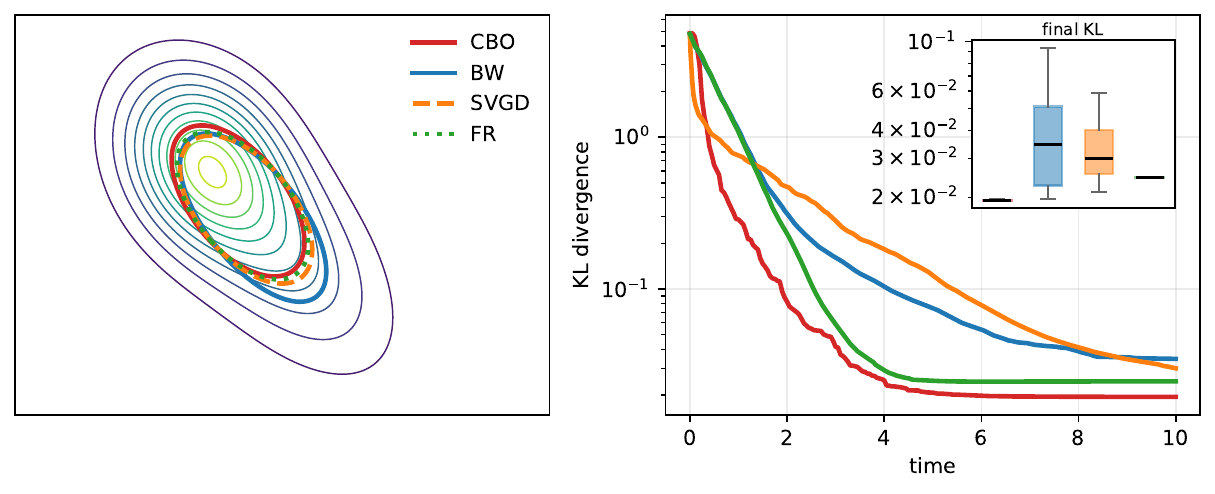}
\includegraphics[width = 0.48\linewidth]{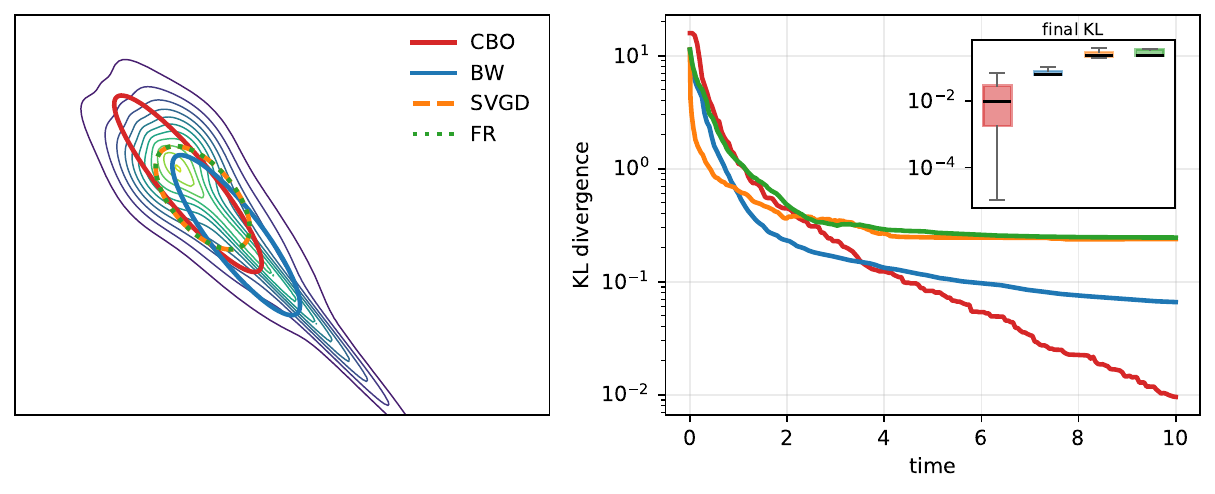}
\includegraphics[width = 0.48\linewidth]{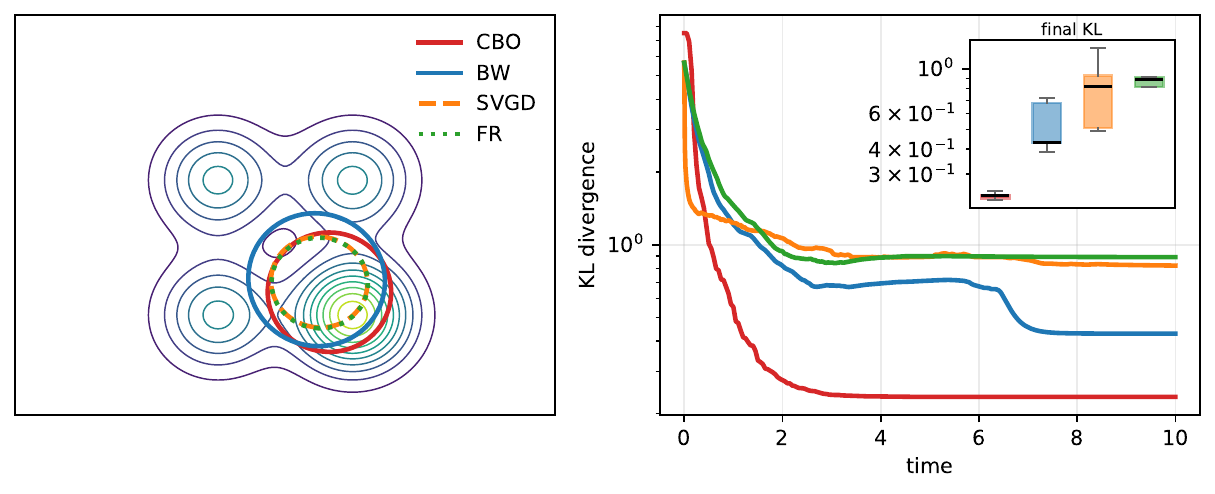}
\caption{Comparison between CBO and \rev{baseline} algorithms in approximating different target densities given by Gaussian mixture models with 2 components (A, B) and 4 components (C, D) (see Table \ref{table:tests}). The 2D plots show the computed solutions for a single run, while the KL evolution is averaged over 100 runs with different random initializations. Median and $[0.25, 0.75]$ interquartile ranges are shown. Parameters: \rev{$\Delta t = 0.05$ (except for SVGD, $\Delta t/4$, and FR $\Delta t/2$, for better stability),}, $\lambda = 1, \sigma = 5, N = 20, \alpha = 10^4$. Reference measure for LBW is $\mathcal{N}(0, I)$.
Supplementary videos illustrating the evolution are available here \cite{borghi2025figshare}. 
}
\label{fig:2dmany_main}
\end{figure*}

\subsection{Algorithm}

We discuss now the implementation aspects of the Gaussian CBO dynamics and validate its optimization capabilities against different test problems. Further details on the experimental settings can be found in \cref{app:num}, and see \cref{alg:gcbo} for a pseudocode description.  The code used for the experiments is available at \url{https://github.com/borghig/GaussCBO}.

\paragraph{Time discretization.} Let $\mu^0 = \mathcal{N}(0, \Sigma^0), \Sigma^0 \in \pd$ be a reference measure. We discretize the CBO particle dynamics \eqref{eq:cboLBW} via a simple Euler--Maruyama scheme with step size $\Delta t>0$:
\begin{equation} \label{eq:cbonum}
\begin{dcases}
m^i_{(k+1)}\hspace{-3mm} &= m^i_{(k)} + \Delta t\,\lambda(\overline{m}^\alpha[\rho^N_{(k)}] - m^i_{(k)}) \\
& \hspace{6mm} + \sqrt{\Delta t}\,\sigma (\overline{m}^\alpha[\rho^N_{(k)}] - m^i_{(k)})\odot  B_{(k)}^{i,m}, \\
T^i_{(k+1)} \hspace{-3mm} &= T^i_{(k)} + \Delta t\,\lambda(\overline{T}^\alpha[\rho^N_{(k)}] - T^i_{(k)}) \\
& \hspace{6mm} 
+ \sqrt{\Delta t}\,\sigma (\overline{T}^\alpha[\rho^N_{(k)}] - T^i_{(k)})\odot B_{(k)}^{i,T},
\end{dcases}
\end{equation}
for $i \in[N]$. Here, $B_{(k)}^{i,m} \sim \mathcal{N}(0, I)$ are i.i.d., while $B_{(k)}^{i,T}$ are standard normal vectors with respect to the scalar product $\langle \cdot, \cdot \rangle_{\Sigma^0}$ (in \cref{app:random} we show how to sample them).

\paragraph{Baseline.} 

We compare the results with \rev{different methods based on the evolution of a Gaussian-measure according to different geometries: Bures--Wasserstein (BW) gradient flow \cite{lambert2022variational}, Gaussian Stein Variational Gradient Descent (SVGD) with kernel $K_1(x,y)=x^\top y+1$ \cite{liu2023towards}, and (natural) Fisher--Rao (FR) gradient flow \cite{barfoot2020multivariate}. Details of the methods can be found in \cref{app:baseline}. The objective function is $\KL(\mu\,|\,\mu^\targ)$ for all methods, and the flows are discretized via an explicit Euler scheme with step size $\Delta t$ and the same quadrature approximation for the expected values.}

\paragraph{Settings.}
In our experiments, the objective $\E$ is the KL divergence from the target measure $\mu^\targ \propto \exp(-V)$. Recall that, to compute the consensus point, we need to evaluate the functional $\E$ at the particle locations. Since $\KL(\mu\,|\,\mu^\targ) = \mathcal{U}(\mu) + \mathcal{V}(\mu)$, we set $\E(\mu) = M = 10^4$ whenever the particle’s covariance matrix is singular (when $\mathcal{U}(\mu) = +\infty)$. The expected value $\mathcal{V} = \int V(x) \mu(\mathrm{d}x)$, $V := -\log(\mu^\targ)$ is approximated using a quadrature rule based on $2d+1$ points \cite{aras2009cubature}, although a Monte Carlo estimate could also be used.

\subsection{Tests}

\paragraph{Case $d = 2$.}
We test the algorithm for different Gaussian Variational Inference (VI) problems where targets are Gaussian mixture models with $K = 2$ or $K = 4$ components 
\begin{equation}\label{eq:gmm}
\mu^\targ = \sum_{k = 1}^{K}w_k \mathcal{N}(m_k, \Sigma_k)
\end{equation}
see Table \ref{table:tests} in \cref{app:num:2} for the exact definitions.

Since system~\eqref{eq:cbonum} is over-parameterized, in CBO algorithms one typically  fixes $\lambda = 1$ \cite{carrillo2021consensus}. The other parameters are set to \rev{$\Delta t = 0.05$}, $\sigma = 5$, $\alpha = 10^4$, and $N = 20$. We keep $\Sigma^0 = I$ to be the reference measure throughout the computation. Given an initialization $(m_{(0)}, \Sigma_{(0)})$ for the \rev{baseline algorithms}, the CBO particles are initialized as $m^i_{(0)} = m_{(0)} + 0.1 \xi^i$, $\xi^i\sim \mathcal{N}(0,I)$, and $T^i_{(0)} = \log_{I}(\Sigma_{(0)}) + 0.1 \Xi^i$, where $\Xi^i_{\ell k} \sim \mathcal{N}(0,1)$ and $\Xi^i = (\Xi^i)^\top$. This makes the comparison between \rev{baseline} and CBO fairer, as it prevents the CBO particles from exploring the search space even before the dynamics begins.

We perform the same experiments for different target measures (A, B, C, and D) and collect statistics over $100$ runs. The starting points $(m_{(0)}, \Sigma_{(0)})$ are initialized as $m_{(0)} \sim \textup{Unif}([-5,5]^2)$ and $\Sigma_{(0)} = I$ in all runs. For all targets, Figure~\ref{fig:2dmany_main} shows one illustrative run and the median KL divergence evolution over 100 runs, together with the $[0.25, 0.75]$ interquartile range. CBO outperforms \rev{baseline algorithms} in all scenarios considered: not only for non-logconcave targets (tests A, D) but also for unimodal ones (tests B, C). \rev{For the baseline algorithms SVGD and FR, the time step is reduced to $\Delta t/4$ and $\Delta t/2$, respectively,} to avoid numerical instability in the \rev{Euler} discretization.

\paragraph{Algorithmic sensitivity.}
The sensitivity of Gaussian CBO with respect to key algorithmic parameters, namely the diffusion strength $\sigma$, the number of particles $N$, and the choice of reference measure for the linearization of the LBW geometry, is analyzed in Appendix~\ref{app:num:2}. 

Performance is influenced by $\sigma$, with intermediate values providing the best balance between exploration and consensus, 
whereas increasing the number of particles beyond a modest threshold yields only marginal gains. 
The effect of updating the reference Gaussian during the optimization to re-center the linearization around the current consensus was also examined, but no systematic improvement was observed in the considered problems. 

\paragraph{Case $d = 10$.}
We also assess the performance of \rev{the baseline methods} and Gaussian CBO on synthetic Gaussian mixture targets in dimension $d=10$. 
A detailed description of the experimental setup, normalization procedure, and parameter choices is provided in Appendix~\ref{app:num:10}. 
Figure~\ref{fig:d10} reports the aggregated evolution of the relative KL divergence across multiple random instances. 
Overall, Gaussian CBO exhibits more stable behavior \rev{than BW and FR baseline} with a smaller interquartile range and better average performance. 
\rev{On average, though, CBO exhibits a worse performance than the SVGD baseline algorithm. For stability, time-steps of FR and SVGD were reduced to $\Delta t/20$.}

\begin{figure}[t]
\centering
\includegraphics[trim = {0 0 0 8mm}, clip , width = 1\linewidth]{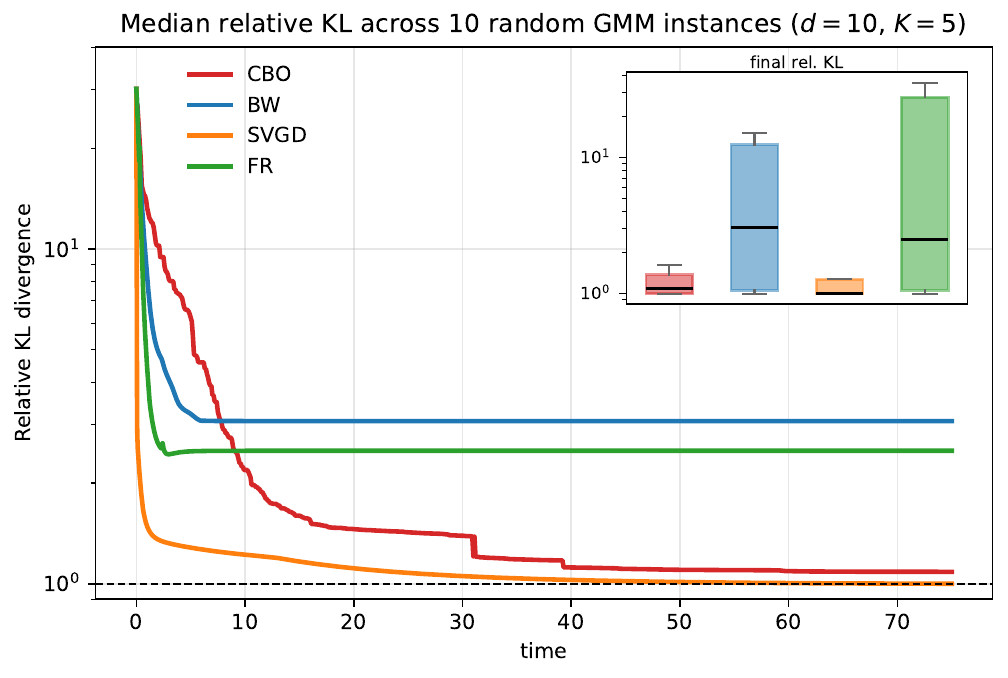}
\caption{Comparison between CBO and \rev{baselines} in approximating Gaussian mixture targets ($K=5$) in $d = 10$. 
The curves report the relative KL divergence $\mathrm{RelKL}(t) = \mathrm{KL}(t)/K^\star$, normalized by the best value achieved on each instance, and averaged across $M=20$ random mixtures. 
Median and $[0.25,0.75]$ interquartile ranges are shown. 
Parameters: $\Delta t = 0.1$, $T=75$, $\lambda=1$, $\sigma=2.5$, $N=100$, $\alpha=10^4$. \rev{For SVGD and FR, the time steps are reduced to $\Delta t/20$ for better stability.} Base measure not updated.
}
\label{fig:d10}
\end{figure}

\rev{
\paragraph{Limitations.}
The proposed method requires multiple function evaluations due to the particle-based nature of the optimizer. While these evaluations can be performed in parallel, this may limit its applicability in high-dimensional settings. 
As mentioned, the tuning of the noise parameter $\sigma$ is also delicate and might be dimension-dependent \cite{carrillo2021consensus}. Moreover, in high dimension, the number of covariance parameters scales like $d^2$, so covariance optimization may dominate the mean optimization. A possible solution is to restrict the LBW geometry to Gaussians with diagonal covariances, as in \cite{petit2026variational}.

Finally, the current approach returns a single Gaussian approximating the target measure, which might not be suitable for complex VI problems. In \cref{app:gmm}, we discuss a possible extension to Gaussian mixture models (GMMs) based on the introduction of multiple swarms of particles.}

\rev{\begin{remark}
The baseline algorithms are deterministic methods, while in the proposed CBO algorithm particles randomly explore the search space via Brownian paths. In the literature, stochastic versions of first-order optimizers have been proposed; see, for instance, \cite{lambert2022variational}, where the randomness comes from Monte Carlo evaluations of the objective functional rather than from explicit exploratory components in the dynamics. These are two different and possibly complementary notions of stochasticity, and we leave a systematic comparison for future work.
\end{remark}}

\section{Outlook}
\label{sec:outlook}

We have introduced a new computational paradigm based on Gaussian particles evolving according to a stochastic CBO-type dynamics in the Linearized Bures--Wasserstein (LBW) space.
  
The proposed framework enables efficient simulation of interacting particle systems while preserving essential features of the underlying Bures--Wasserstein geometry.  
Beyond the specific algorithm developed here, the LBW representation opens the door to a variety of new stochastic optimization dynamics on the space of Gaussian measures.

It is also natural to ask whether one can define and simulate the particle dynamics directly in the full Bures--Wasserstein space, without relying on linearization.  
This would require a careful treatment of singular Gaussians, which is nontrivial in the non-linear geometry of $\mathcal{N}^d$.  

Finally, the optimal transport viewpoint also opens the door to extensions of particle-based optimizers beyond the Gaussian setting, for instance along the lines of Linear Optimal Transport. 
This would allow to handle broader classes of probability measures while retaining the consensus-based optimization paradighm.

\section*{Aknowledgments}

GB was supported by the Wolfson Fellowship of the Royal Society “Uncertainty quantification, data-driven simulations and learning of multiscale complex systems governed by PDEs” of Prof. L. Pareschi at Heriot-Watt University. JAC was supported by the Advanced Grant Nonlocal-CPD (Nonlocal PDEs for Complex Particle dynamics: Phase Transitions, Patterns and Synchronization) of the European Research Council Executive Agency (ERC) under the European Union’s Horizon 2020 research and innovation programme (grant agreement No. 883363). JAC was also partially supported by the “Maria de Maeztu” Excellence Unit IMAG, reference CEX2020-001105-M, funded by the Spanish ministry of Science MCIN/AEI/10.13039/501100011033/ and the EPSRC grant numbers EP/T022132/1 and EP/V051121/1.

\section*{Impact Statement}

This paper presents work whose goal is to advance the field of optimization and there are not potential societal consequences of our work.

\nocite{langley00}

\bibliography{bibfile}
\bibliographystyle{icml2026}

\newpage
\appendix
\onecolumn

\section{Background on the Bures--Wasserstein space}
\label{sec:2}

In this section we recall in detail the geometry of the Bures--Wasserstein manifold as its relation with the $L^2$-Wasserstein distance between arbitrary probability measures.

\subsection{Notation} 
The set $\mathcal{P}(\Rd)$ is the set of Borel probability measure over $\Rd$, and $\mathcal{P}_2(\Rd)$ is the subset of measures with bounded second moments: $\mu \in \mathcal{P}(\Rd)$ with $M_2(\mu):= (\int |x|^2 \mu(\d x))^{1/2} < \infty$. $\mathcal{P}_2^{ac}\subset \mathcal{P}(\Rd)$ is the subset of measures admitting a density with respect to Lebesgue and we will sometimes abuse the notation by denoting the density of $\mu$ with $\mu = \mu(x)$ itself. For a point $x\in \Rd$, $\delta_x\in \mathcal{P}(\Rd)$ is the Dirac measure: $\delta_x(A) = 1$ if $x\in A$, and $0$ otherwise.

With $\sym$ we indicate the set of $d\times d$ symmetric matrices, while $\psd, \pd \subset \sym$ are, respectively, the sets of positive semi-definite and positive definite symmetric matrices. Given a mean  $m\in \Rd$ and a covariance matrix $\psd$, we indicate the correspondent Gaussian probability measure with $\mathcal{N}(m,\Sigma)$, and with $\mathcal{N}^d\subset \mathcal{P}_2(\Rd)$ the set of all Gaussian probability measures over $\Rd$. For any $A,B\in \sym$ we write $A\succeq B$ if $A - B \in \psd$ and $A\succ B$ if $A - B \in \pd$. The trace operator is given by $A\in \R^{d\times d}$, $\tr(A) = \sum_{i=1}^d A_{ii}$, and for $A\in \pd$, $\sqrt{A} = B$ where $B $ is the unique matrix $B \in \pd$ such that $BB = A$.

Random variables are assumed to be defined on a common probability space $(\Omega, \mathcal{F}, \mathbb{P})$. We write $X\sim \mu$, if $X$ is a random variable with law $\mu\in \mathcal{P}(\Rd)$. 

\subsection{Wasserstein distance}
For two $\mu, \nu\in \mathcal{P}_2(\Rd)$ the $L^2$-Wasserstein distance \cite{santambrogio2015optimal} is defined as
\[
\W(\mu, \nu) := \sqrt{\inf_{X\sim \mu,\, Y\sim \nu} \, \mathbb{E} |X - Y |^2}\,.
\]
Given two Gaussians $\mu = \mathcal{N}(m, \Sigma)$ and $\nu = \mathcal{N}(\overline{m}, \overline{\Sigma})$ which are non-singular, that is, $\Sigma, \overline{\Sigma}\in \pd$, the $L^2$-Wasserstein distance takes the explicit form \cite{givens1984class,olkin1982,dowson1983gauss}
\[
\W(\mu, \nu) = \sqrt{
|m - \overline{m}|^2 + \tr(\Sigma) + \tr(\overline{\Sigma}) -  2\tr\left (\Sigma^{1/2}\overline{\Sigma} \Sigma^{1/2} \right)^{1/2}\,.
}
\]
With a slight abuse of notation, for two zero-mean Gaussian measures we will sometimes use the shorter  $\W(\Sigma, \overline{\Sigma})$ to indicate $\W\left(\mathcal{N}(0,\Sigma),\mathcal{N}(0, \overline{\Sigma}) \right)$. In the literature, this is referred to as the Bures--Wasserstein distance, as it also coincides with the Bures metric \cite{uhlmann1992metric} between covariance matrices in quantum information theory.

Let $\omega:\mathcal{P}(\Rd)\to [0, +\infty)$ be a weight function. Given a collection of probability measures $\mu^1, \dots, \mu^N\in \mathcal{P}_2(\Rd)$ , the weighted Wasserstein barycenters, or Frech\'{e}t means, are defined as the solutions to the problem
\begin{equation}\label{eq:bary}
\overline{\mu} \in \underset{\nu\in \mathcal{P}_2(\Rd)}{\textup{argmin}} \sum_{i = 1}^N \omega(\mu^i)\W^2(\mu^i, \overline{\mu})\,.
\end{equation}
The notion of barycenter generalizes the notion of mean to metric spaces, and uniqueness in the Wasserstein space is ensured only in presence of probability densities, see \cite{cariler2011bary} for more details.  Moreover, if the measures are non-singular Gaussians, $\mu^i = \mathcal{N}(m^i,\Sigma^i)$, $i = 1,\dots,N$, the  barycenter is unique and it is also a Gaussian $\overline{\mu} = \mathcal{N}(\overline{m},\overline{\Sigma})$ with mean and covariance matrix characterized by the equations
\begin{equation}  \label{eq:BWbary}
\overline{m} = \sum_{i = 1}^N \frac{\omega(\mu^i)}{\sum_j \omega(\mu^j)} m^i \,, \qquad \overline{\Sigma} = \sum_{i=1}^N \frac{\omega(\mu^i)}{\sum_j \omega(\mu^j)} \left((\Sigma^i)^{1/2}\overline{\Sigma} (\Sigma^i)^{1/2} \right)^{1/2}\,.
\end{equation}
We note that the barycenter mean takes an explicit form, while the covariance matrix is the solution of a matrix equation, which is well-defined \cite{ruschendorf2002}. 
The  equation can be used for the computation of the barycenter via a fixed point iteration \cite{alvarez2016fixed} which is proven to convergence with a rate \cite{chewi2020gradient}. This strategy corresponds to solving \eqref{eq:bary} through a (Wasserstein) gradient descent algorithm with step size 1, see \cite{zemel2019frechet}.

\subsection{Bures--Wasserstein manifold}

The convenience of working with Gaussian measures goes beyond having explicit formulas for $\W$ and simpler characterization of  barycenters. The space of non-singular Gaussians with the $L^2$-Wasserstein metric attains the much richer structure of a Riemannian manifold,  known as the Bures-Wasserstein (BW) manifold \cite{takatsu2011,Malago2018,bathia2019}.

We identify the space of non-singular Gaussian measure with $\Rd \times \pd$. At every point $(m, \Sigma)\in \Rd \times \pd$, the tangent space is given by $T_{(m, \Sigma)}(\Rd \times \pd) = \Rd \times T_{\Sigma}\pd$ with
\[
T_\Sigma\pd = \sym\,.
\]
In the literature, one can find two different parametrizations of the BW Riemannian metric  on $T_\Sigma\pd$: one introduced in \cite{takatsu2011}, and the other one studied in \cite{Malago2018,bathia2019}. We use the former as it consistent with the classical Riemmanian-like structure of the $L^2$-Wasserstein space introduced in
 the seminal paper by F. Otto for the porous medium equation \cite{otto2001geometry}. Moreover, it allows for more efficient computations of exponential maps, see Remark \ref{rmk:riemann} for a comparison with the alternative parametrization presented in \cite{Malago2018,bathia2019}.
 For any $T, S\in T_{\Sigma}\pd$, the Riemannian metric is given by
\begin{equation} \label{eq:dBW}
d_\Sigma(T,S) = \tr(T\Sigma S) =: \langle T, S\rangle_{\Sigma} \,.
\end{equation}
The Riemannian exponential map  is defined by 
\begin{equation} 
\label{eq:expBW_app}
 \exp_{\Sigma}(T) := (I+ T)\Sigma (I+ T)\qquad \textup{for}\quad T  \succ -I\,.
\end{equation}
 The condition $T \succ -I$ is required to ensure that  $(I + T) \Sigma (I+T)\succ 0$, that is, $\exp_{\Sigma}(T)\in \pd$. Therefore, the BW manifold is not geodesically complete and the definition domain of the exponential is the translated open cone $\pd - I$. 
 The Riemannian logarithm between  $\Sigma,\overline{\Sigma} \in \pd$ is given by  
\begin{equation} \label{eq:logBW}
 \log_{\Sigma}(\overline{\Sigma}):= \overline{\Sigma}(\Sigma^i \overline{\Sigma})^{-\frac12} - I\,,
\end{equation}
and corresponds to the optimal transport map (shifted by $I$) between $\mathcal{N}(0,\Sigma)$ and $\mathcal{N}(0,\overline{\Sigma})$, that is, 
\[
\W^2(\Sigma, \overline{\Sigma}) = \mathbb{E}|X - ( I + \log_{\Sigma}(\overline{\Sigma})) X |^2
\qquad \textup{with}\quad X \sim \mathcal{N}(0, \Sigma)\,.
\]

The restriction $T \succ - I$ on the exponential map $T$ becomes now intuitive in light of Brenier's theorem \cite{brenier1991}: as the optimal transport map needs to be the gradient of a convex function, we have indeed that $x^\top(I + T)x$ is convex only provided $I + T\succeq 0$.  The case where $I + T$ is only positive semi-definite is excluded as it would transport $\mathcal{N}(0, \Sigma)$ to a singular Gaussian, thus leaving the BW manifold.
More generally, we underline that the BW geometry is 
coherent with the formal Otto Riemannian geometry \cite{otto2001geometry}, which  is actually rigorous when restricting to the space of probabilities with smooth positive densities  \cite{lott2008some}. 

Furthermore, the space of Gaussian measures can be seen as a stratified space of manifolds, each corresponding to a different rank of the covariance matrix \cite{yann2023different}. While Wasserstein distance between singular Gaussians are well-defined, the Riemannian structure is lost at the boundary of the BW manifold.
We note that this stratified structure of sub-manifolds appears also in the larger $L^2$-Wasserstein space, see Remark 2.1 in \cite{ren2024ou}. 

\begin{remark}
The BW geometry is only one of the possible choices of Riemannian geometry for $\pd$. A popular choice, for instance, is the Affine Invariant (AI) metric \cite{pennec2006ai}. In \cite{han2021on}, the authors compare the two metrics concluding that BW is more suitable  for optimization in $\pd$ thanks to its non-negative curvature and linear dependence on the space. Also, we note that the exponential maps in the form \eqref{eq:expBW_app} is computationally cheap to evaluate. This is an important feature that allow us to parameterize the space $\pd$ via tangent vectors in the Linearized Bures--Wasserstein geometry.
\end{remark}

\begin{remark}
\label{rmk:riemann}
In  \cite{Malago2018,bathia2019} the authors propose a different definition of the Riemmanian metric on $T\pd$ given by
\begin{equation}
\label{eq:dBWtidle}
\tilde d_\Sigma(V,U):= \frac12 \tr\left( \mathcal{L}_\Sigma[V] U \right)
\end{equation}
for  $V,U\in \sym$ and $\Sigma\in\pd$, where $ \mathcal{L}_{\Sigma}[V]$ is known as the Lyapunov operator, and it is the unique solution to the Lyapunov equation
$ \mathcal{L}_{\Sigma}[V] \Sigma + \Sigma  \mathcal{L}_{\Sigma}[V] = V$.
The corresponding exponential map is studied in details in \cite{yann2023different} and it is given by
\begin{equation*} 
   \widetilde{\exp}_{\Sigma}(V)  = \Sigma + V + \mathcal{L}_{\Sigma}[V] \Sigma  \mathcal{L}_{\Sigma}[V]\,.
\end{equation*}
The relation between the different metrics proposed, \eqref{eq:dBW} and \eqref{eq:dBWtidle}, is given by 
\begin{equation*}
T = \mathcal{L}_{\Sigma}[V]\,.
\end{equation*}
The literature for the computation of the Lyapunov operator is particularly rich and it includes methods for large-scale matrices too, see the review paper \cite{simoncini2016}. As mentioned in \cite{han2021on}, the cost of exact computation is the same as matrix exponential or inversion, that is, $\mathcal{O}(d^3)$. The exponential map \eqref{eq:expBW_app}, instead, requires to perform matrix multiplications. 

\end{remark}

\section{Linearized Bures--Wasserstein space}
\label{app:lbw}

In this section we propose a more detailed derivation of the Linearized Bures--Wasserstein (LBW) geometry used to define the Gaussian Consensus-Based Optimization algorithm.

\subsection{Linear Optimal Transport} 

The Linear Optimal Transport (LOT) distance is a computationally efficient metric between probabilities measures which has recently gained popularity in applications \cite{kolouri2016continuous,moosmuller2023linear,sarrazin2024lot,craig2020lot}. 
Consider a  reference probability measure $\mu^0 \in \mathcal{P}_2^{ac}(\Rd)$, and $\mu^1, \mu^2 \in \mathcal{P}_2(\Rd)$. Let $\mathcal{T}_1, \mathcal{T}_2:\Rd \to \Rd$ the  OT  maps from $\mu^0$ to $\mu^1, \mu^2$, respectively. That is, we have that $\W(\mu^0, \mu^1)^2 = \mathbb{E} |X - \mathcal{T}_1(X)|^2$ if $X \sim \mu^0$, and the same holds for $\mu^2$. The LOT distance with base $\mu^0$ is given by 
\begin{equation}
\LW_{\mu^0}(\mu^1, \mu^2):= \mathbb{E} |\mathcal{T}_1(X) - \mathcal{T}_2(X) |^2  \qquad \text{for}\quad  X\sim \mu^0\,,
\end{equation}
or, equivalently, it corresponds to the weighted $L^2$ norm $\| \mathcal{T}_1 - \mathcal{T}_2\|_{L^2(\mu^0)}$ between the OT maps. Derived as a simplified version of the Wasserstein metric, its main feature is that it allows to compute the mutual distances between $N$ probability measures by solving only $N$ optimal transport problems (instead of the $N^2$ required by $\W$).

We apply the LOT approach to the BW space to derive the LBW metric between Gaussian measures. As we expect, this corresponds to the BW Riemannian metric at a reference Gaussian measure $\mu^0$. 

To show this, we first recall that if $Y\sim \mathcal{N}(0,\Sigma)$, then $\mathbb{E}|Y|^2 = \tr(\Sigma)$ and, for $A\in \sym$, it holds $AY \in \mathcal{N}(0, A\Sigma A)$.
Consider now $\mu^0 = \mathcal{N}(0,\Sigma^0), \Sigma^0 \in \pd$, and, for simplicity, two other zero-mean measures  $\mu^1 = \mathcal{N}(0, \Sigma^1), \mu^1 = \mathcal{N}(0, \Sigma^1)$, $\Sigma^1, \Sigma^2 \in \pd$. We note that the OT map between zero-mean Gaussians is a linear map, and in particular, from \eqref{eq:expBW_app} and \eqref{eq:logBW} it holds $\mathcal{T}_i(x) = (I + \log_{\Sigma^0}(\Sigma^i))x$,  $i = 1,2$.
Direct computations show that the LOT distance corresponds to the BW metric at $T_{\Sigma^0}\pd$:
\begin{align*}
\LW_{\mu^0}(\mu^1, \mu^2)^2 & = \mathbb{E}| \mathcal{T}_1(X) - \mathcal{T}_2(X)|^2 \\
& = \mathbb{E}| (\mathcal{T}_1(X) - X ) - (\mathcal{T}_2(X) - X)|^2 \\
& = \mathbb{E} \left|\left (\log_{\Sigma^0}(\Sigma^1) - \log_{\Sigma^0}(\Sigma^2)\right) X\right|^2 \\
& = \tr\left( \left (\log_{\Sigma^0}(\Sigma^1) - \log_{\Sigma^0}(\Sigma^2)\right) \Sigma^0 \left (\log_{\Sigma^0}(\Sigma^1) - \log_{\Sigma^0}(\Sigma^2)\right)    \right) \\
& = \| \log_{\Sigma^0}(\Sigma^1) - \log_{\Sigma^0}(\Sigma^2)\|^2_{\Sigma^0}
\end{align*}
since $\tr(A\Sigma^0 B) = \langle A, B \rangle_{\Sigma^0} $ from \eqref{eq:dBW}. 
Of course, if we include arbitrary means $m^1, m^2 \in \Rd$, for $\mu^1= \mathcal{N}(m^1, \Sigma^1)$, $\mu^2= \mathcal{N}(m^2, \Sigma^2)$ the same computations lead to 
\begin{equation}
\LW_{\mu^0}(\mu^1, \mu^2)^2 = |m^1 - m^2|^2 + \| \log_{\Sigma^0}(\Sigma^1) - \log_{\Sigma^0}(\Sigma^2)\|^2_{\Sigma^0}\,.
\end{equation}

\begin{figure}[t]
\centering
\begin{subfigure}[t]{0.9\linewidth}
\includegraphics[width = 1\linewidth]{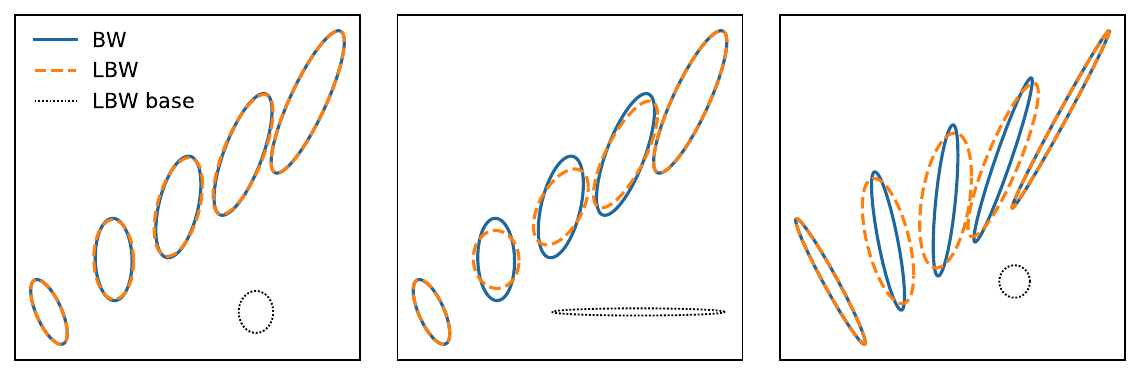}
\caption{Geodesics in the BW and LBW spaces}
\label{fig:comparison:int}
\end{subfigure}
\medskip

\begin{subfigure}[t]{0.9\linewidth}
\includegraphics[width = 1\linewidth]{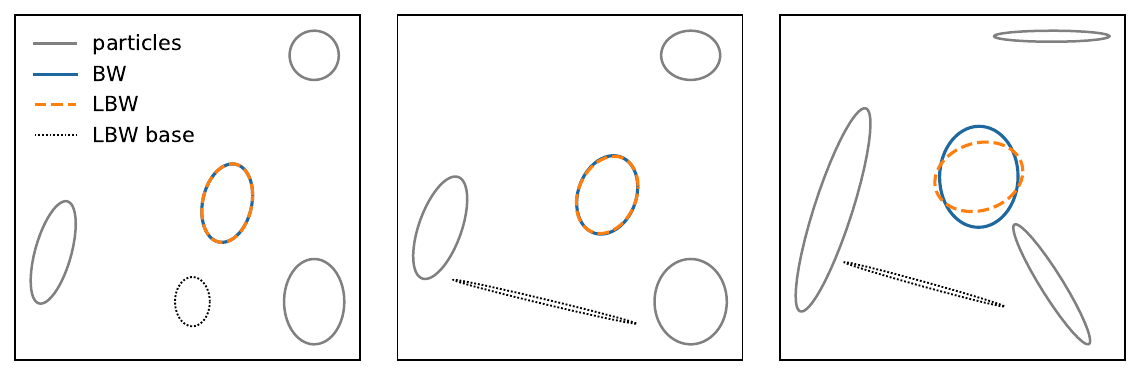}
\caption{Barycenters between 3 Gaussian particles in the BW and LBW spaces }
\label{fig:comparison:bar}
\end{subfigure}
\caption{
Visual comparison between the geometry of BW and its linearization LBW. Different scenarios are considered, and, in particular, different base measures for LBW are used. Each Gaussian measure $\mathcal{N}(m,\Sigma)$ is represented by an ellipsis centered at the mean $m$ and stretched according to the covariance matrix $\Sigma$.
}
\label{fig:comparison}
\end{figure}

\subsection{Geodesics, barycenters and extension of exponential maps}

Geodesics in the LBW geometry corresponds to the Wasserstein generalized geodesics \cite{ags2008} and are given by $\mu(\tau) = \mathcal{N}(m(\tau), \Sigma(\tau))$ $\tau \in [0,1]$ with
\begin{equation*}
m(\tau) = (1-\tau)m^1 +\tau m^2\,, \qquad
\Sigma(\tau) = \exp_{\Sigma^0}\left((1-\tau)\log_{\Sigma^0}(\Sigma^1) + \tau \log_{\Sigma^0}(\Sigma^2) \right)\,.
\end{equation*}
In Figure \ref{fig:comparison:int} we compare BW and LBW geodesics for different values of $\Sigma^0, \Sigma^1, \Sigma^2$. The two geometries seem to diverge the more the covariance matrices are close to being singular.

Barycenters in LBW take an explicit form, unlike BW barycenters. Consider $N$ Gaussian particles $\mu^i = \mathcal{N}(m^i, \Sigma^i), i = 1, \dots, N$, their weighted LBW barycenter is directly given by $\overline{\mu} = \mathcal{N}(\overline{\mu}, \overline{\Sigma})$ with 
\begin{equation} \label{eq:LBWbary}
\overline{m} = \sum_{i = 1}^N \frac{\omega(\mu^i)}{\sum_j \omega(\mu^j)} m^i \,, \qquad \overline{\Sigma} = \exp_{\Sigma^0}\left( \sum_{i=1}^N \frac{\omega(\mu^i)}{\sum_j \omega(\mu^j)} \log_{\Sigma^0}(\Sigma^i)  \right)\,.
\end{equation}
Note that if we identify $\Sigma^i$ with the corresponding tangent vector $T^i = \log_{\Sigma^0}(\Sigma^i)$ the characterization of the barycenter reduces to   
\begin{equation} \label{eq:Tbary}
\overline{T} = \sum_{i = 1}^N \frac{\omega(\mu^i)}{\sum_j \omega(\mu^j)} T^i\,.
\end{equation}
While it may not be obvious by their respective expressions  \eqref{eq:BWbary} and  \eqref{eq:LBWbary}, the LBW barycenter corresponds to a first order approximation to the BW one. As shown in \cite{zemel2019frechet}, the LBW barycenter vector $\overline{T}$  \label{eq:Tbary} corresponds to the (negative) gradient of the BW barycenter functional \eqref{eq:bary}. As we can see from Figure \ref{fig:comparison:bar}, this approximation can be fairly accurate if, again, the Gaussian particles are far from the singularity.


Following the LOT approach, we are identifying each non-degenerate Gaussian measure with its corresponding BW tangent vector. Since the BW manifold is geodesically incomplete and the exponential domain is $\pd - I$ (not the entire tangent space $\sym$), the parametrization of the covariance matrices is given by
\begin{align*}
(\pd- I) &\to \pd \\
T & \mapsto \Sigma = \exp_{\Sigma^0}(T)\,.
\end{align*}
To be able to parametrize every Gaussian measure, possibly with a singular covariance matrix, we can extend by continuity the definition domain to the closed convex cone $\psd - I$. We will go further and extend its definition to the entire space $\sym$ by simply setting 
\[
\exp_{\Sigma^0}(T):=(I +T)\Sigma^0 (I + T)\qquad \textup{for any} \quad T \in \sym.
\]
Note that the exponential always satisfies $\exp_{\Sigma^0}(T)\in \psd$.  

To sum up, given a base Gaussian measure $\mu^0 = \mathcal{N}(m^0, \Sigma^0)$, $m^0\in \Rd, \Sigma \in \pd$, the LBW space can be identified as the finite-dimensional Euclidean space
\begin{equation}
\label{eq:LBW}
\Rd \times \sym \quad \textup{with product} \quad \langle m^1 , m^2\rangle + \langle T^1, T^2 \rangle_{\Sigma^0} 
\end{equation}
for any $(m^1, T^1), (m^2, T^2)\in\Rd \times \sym$, and we denote with $\|\cdot \|_{\textup{LBW}(\Sigma^0)}$ the respective norm.
While this leads to a redundant parametrization of $\mathcal{N}_d$, we obtain the computational benefit of dealing with an unconstrained space rather than the open cone $\pd - I$. This is particularly convenient for the definition of Brownian Gaussian particles. 

\subsection{Brownian processes in LBW}
\label{sec:brown}

We recall for completeness how we constructed a Brownian process in LBW.

Note that the dimension of $\sym$ is $d(d+1)/2$ and that, for $\Sigma^0 = I$ an orthogonal basis is simply given by symmetric matrices with either only one non-zero entry at the diagonal, or two non-zero entries off the diagonal. We refer to Section \cref{app:num} for a discussion on how to generate an orthonormal basis $\{e_\ell\}_{\ell = 1}^{d(d+1)/2}$ for an arbitrary $\Sigma^0\in \pd$.
Consider now a Brownian process $(B_t^m)_{t\geq 0}$ in $\Rd$ and $d(d+1)/2$ i.i.d. one-dimensional Brownian processes $\{(\xi^\ell)_{t\geq 0}\}_{\ell = 1}^{d(d+1)/2}$. We can construct a Brownian process in LBW as
\begin{equation} \label{eq:BM}
B_t:= (B_t^m, B_t^T) \qquad \textup{where} \quad B^T_t:= \sum_{\ell=1}^{d(d+1)/2} e_\ell\, \xi_t^\ell\;.
\end{equation}
It is interesting to note that the associated probability measure
\[
\mu_t = \mathcal{N}(B^m_t, \Sigma_t) \qquad \textup{with} \quad \Sigma_t = \exp_{\Sigma^0}(B_t^T) 
\]
is a random process taking values in the space $\mathcal{N}_d$ of Gaussian measures. Moreover, it explores the entire $\mathcal{N}_d$, including Gaussians with singular covariance matrix, going therefore beyond the BW space. Figure \ref{fig:BM2} shows a Brownian path $B_t^T$ and the corresponding covariance matrix $\Sigma_t$ via a 2-dimensional projection of the dynamics. 

\begin{remark}
In principle, one could choose to restrict the LBW space to the closed cone of optimal transport maps, that is, constrain the tangent vector $T \in \sym$ to the closed convex cone $\psd - I$ by including suitable  boundary conditions. The random process exploring the space would then consist of a reflected SDE \cite{pilipenko2014} 
whose numerical simulation requires to project the dynamics back to the cone at every time iteration \cite{pettersson2000}. Therefore, for computational efficiency, we chose here to simply extend the domain of the map $\exp_{\Sigma^0}(\cdot)$ to the entire space $\sym$,  and to consider unconstrained dynamics. 
\end{remark}

\section{Well-posedness and convergence: proofs and additional remarks}
\label{app:proofs}

We recall in this section the main assumptions under which the particles CBO dynamics \eqref{eq:cboLBW} and its mean-field approximation \eqref{eq:mf} are well-defined, and provide the proofs. We also discuss additional convergence techniques for the CBO-type algorithms (see \cref{rmk:fornasier}).

\subsection{Particles dynamics: Lemma \ref{l:well} and Lemma \ref{l:KLregularized}} \label{app:proofs:well}

In Lemma \ref{l:well} we claimed that the particle evolution \eqref{eq:cboLBW} admits a unique strong solution provided \eqref{asm:lip} holds, which we recall was 
\[
|\E(\mu^1) - \E(\mu^2)| \leq L_\E \left(1 + M_2(\mu^1) + M_2(\mu^2)\right)  \LW_{\mu^0}(\mu^1, \mu^2)\,.
\]

\begin{proof}[Proof of Lemma \ref{l:well}]
Recall from \eqref{eq:LBW} that the LBW norm on $\Rd\times \sym$ with base $\Sigma^0$ is given by  $\| (m,T) \|_{\textup{LBW}(\Sigma^0)}^2   = |m|^2 + \| T\|^2_{\Sigma^0}$.
We first notice that the locally Lipschitz continuity assumption on $\E^\#$ is equivalent to show
\begin{multline*}
 |\E^\#(m^1,T^1) - \E^\#(m^2,T^2)|\leq L_\E \left(1  + \|(m^1,T^1)\|_{\textup{LBW}(\Sigma^0)} + \|(m^2,T^2)\|_{\textup{LBW}(\Sigma^0)} \right) \\
 \times \|(m^1,T^1)- (m^2,T^2)\|_{\textup{LBW}(\Sigma^0)}\,.
\end{multline*}
This is also an equivalent condition to \eqref{asm:lip} since $M_2(\mu^i)$ and  $\|(m^i,T^i)\|_{\textup{LBW}(\Sigma^0)}$ are equivalent up to a positive constant:
\begin{equation} \label{eq:moments}
M_2(\mu^i)^2 = |m^i|^2 + \text{Tr}((I + T^i)\Sigma^0(I + T^i)) = |m^i|^2 + \|I + T^i\|^2_{\Sigma^0}\,,
\end{equation}
and since 
$
\LW_{\mu^0}(\mu^1, \mu^2) = \|(m^1,T^1)- (m^2,T^2)\|_{\textup{LBW}(\Sigma^0)}.
$
After identifying $\Rd \times \sym$ with $\R^D$ for given an orthonormal basis, we can use the well-posedness result, Theorem 2.1 in \cite{carrillo2018analytical}, which states that the dynamics is well-posed for a locally Lipschitz, finite-dimensional, objective function $\E^\#$. 
\end{proof}

Next, we claimed in Lemma \ref{l:KLregularized} that a regularized version of the KL divergence, $\KL_\ve$ satisfies the locally Lipschitz condition \eqref{asm:lip} needed for well-posedness.
We recall that for a target measure $\mu^\targ \propto \exp(-V)$ the KL divergence is given for $\mu \in \mathcal{P}^{ac}(\Rd)$ by
\[
\KL(\mu | \mu^\targ)  = \mathcal{V}(\mu) + \mathcal{U}(\mu)
= \int V(x) \mu(\d x) + \int \log(\mu(x))\mu(\d x)\,.
\]

In \cref{sec:cbo} we considered a regularized version $\mathcal{U}_\ve, \ve>0,$ for the log-entropy $\mathcal{U}$ defined as
\[
\mathcal{U}_\ve(\mu) = \mathcal{U}\left(\mathcal{N}(m, \clip_\ve(\Sigma)) \right)\qquad \textup{for}\quad \mu= \mathcal{N}(\mu, \Sigma)\,,
\]
where $\textup{clip}_\ve(\Sigma) := \sum_{\ell} \max\{\lambda_\ell, \wedge \ve \} u_\ell u_\ell^\top $ for $\Sigma = \sum_{\ell} \lambda_\ell u_\ell u_\ell^\top$,  with $(\lambda_\ell, u_\ell)_\ell$ an eigenbasis for $\Sigma$. The associated regularized  KL divergence for Gaussians $\mu$ is then given by 
\[
\KL_\ve(\mu | \mu^\targ)  = \mathcal{V}(\mu) + \mathcal{U}_\ve(\mu)\,.
\]

We recall first a result from \cite{polyanskiy2016wasserstein}

\begin{proposition}[{\cite{polyanskiy2016wasserstein}, Proposition 1}] \label{p:polyan}
Let $\mu^1, \mu^2 \in \mathcal{P}_2^{ac}(\Rd)$, and $(c_1,c_2)$-regular, that is, such that
\[
| \nabla \log \mu^i(x)| \leq c_1 |x| + c_2\,,
\]
then 
\begin{equation}
|\mathcal{U}(\mu^1) - \mathcal{U}(\mu^1)| \leq \left (c_2 + \frac{c_1}2M_2(\mu^1) + \frac{c_1}2M_2(\mu^2)\right ) \W(\mu^1, \mu^2)\,.
\end{equation}
\end{proposition}

\begin{proof}[Proof of Lemma \ref{l:KLregularized}] 
We need to check that $\KL_\ve$ satisfied the locally Lipschitz condition \eqref{asm:lip}.  First of all, we note that since the LOT distance is an upper bound for the $L^2$-Wasserstein distance, condition 
\begin{equation} \label{eq:wasslip}
|\E(\mu^1) - \E(\mu^2)| \leq L_\E \left(1 + M_2(\mu^1) + M_2(\mu^2)\right)  \W(\mu^1, \mu^2)
\end{equation}
implies \eqref{asm:lip}. We show that both $\mathcal{V}$ and $\mathcal{U}_\ve$ satisfy  \eqref{eq:wasslip} and, as a consequence, so does $\KL_\ve$.

Note that $\mathcal{V}(\mu) = \mathbb{E}V(X)$ for $X\sim \mu$, and let $\mu^1, \mu^2\in \mathcal{N}^d$ and $X^1\sim \mu^1, X^2\sim\mu^2$ such that are they are optimally coupled: $\mathbb{E}|X^1 - X^2|^2 = \W^2(\mu^2)$. It holds
\begin{align*}
|\mathcal{V}(\mu^1) - \mathcal{V}(\mu^2)|
 &= |\mathbb{E}V(X^1) - \mathbb{E}V(X^2)| \\
 & \leq L_V\mathbb{E}\left[(1 + |X^1| + |X^2|)|X^1 - X^2|\right] \\
 & \leq L_V \sqrt{\mathbb{E}\left[ (1 + |X^1| + |X^2|)^2 \right]}
 \sqrt{\mathbb{E}|X^1 - X^2|^2} \\
 & \leq C L_V \left((1 + \sqrt{\mathbb{E}|X^1|^2} + \sqrt{\mathbb{E}|X^1|^2}\right)) \W(\mu^1, \mu^2)
\end{align*}
for some $C>0$, where we used Cauchy--Schwartz inequality and the coupling optimality. Since $\sqrt{\mathbb{E}|X^i|^2} = M_2(\mu^i)$, we proved that $\mathcal{V}$ is locally Lipschitz in the sense of \eqref{eq:wasslip}.

For $\mu = \mathcal{N}(m, \Sigma)$ with $\Sigma \succcurlyeq \ve I$, $\ve>0$, it holds
\[
|\nabla \log \mu(x)| = |\Sigma^{-1}(x - m)| \leq \ve^{-1} (|x| + |m|),
\]
sine the largest eigenvalue of $\Sigma^{-1}$ is bounded by $\ve^{-1}$. By applying Proposition \ref{p:polyan} we then obtain that \eqref{eq:wasslip} is satisfied uniformly for Gaussians with bounded eigenvalues.
\end{proof}

We conclude with a remark on other possible energy functionals.

\begin{remark}
    An alternative discrepancy measure to the KL divergence is the Maximum Mean Discrepancy (MMD) induced by a reproducing kernel Hilbert space (RKHS). Under suitable smoothness and normalization conditions on the kernel, one can prove that the MMD is controlled by the Wasserstein distance. In particular, as shown in \cite{vayer2023kernel}, Proposition 2, Corollary 3,  it holds
\[
\|\mu^1 - \mu^2\|_{\mathcal{H}_\kappa} \leq C \W(\mu^1, \mu^2),
\]
for a constant $C$ depending on the curvature of the kernel at the origin. We refer to \cite{vayer2023kernel} and the references therein for more details and the definition of the norm $\|\cdot\|_{\mathcal{H}_\kappa}$ associated to the reproducing kernel space $\mathcal{H}_\kappa$.
\end{remark}

\subsection{Proof of Lemma \ref{l:wellmf}} \label{app:proofs:mf}

Recall in Lemma \ref{l:wellmf} we stated that the mean-field dynamics \eqref{eq:mf} admits a unique strong solution provided $\E$ satisfies \cref{asm:mf}. That is, if $\E$ is bounded form below, and either bounded from above or grows quadratically at infinity 
\begin{equation}\label{eq:growth}
\E(\mu) - \inf \E \geq c_l M_2(\mu)^2 \qquad \textup{for}\quad \mu, \quad M_2(\mu)>R\,.
\end{equation}

\begin{proof}[Proof of Lemma \ref{l:wellmf}]
As in Lemma \ref{l:well}, well-posedness of the mean-field dynamics follows from well-posedness of the mean-field CBO dynamics in $\R^D$. In \cite{carrillo2021consensus} the authors prove well-posedness provided the finite-dimensional objective $\E^\#$ satisfies
\begin{enumerate}[label=\roman*)]
\item  $\underline{\E} = \inf \E^\# > -\infty$;
\item  there exists constants $\tilde{L}_{\E}, \tilde{c}_u$ such that for all $z_1 =(m^1, T^1), z_2 = (m^2, T^2)$
\begin{equation*}
\begin{dcases}
|\E^\#(z_1) - \E^\#(z_2)|  \leq \tilde{L}_{\E}\left(1 + \| z_1 \|_{\textup{LBW}(\Sigma^0)} + \|z_2\|_{\textup{LBW}(\Sigma^0)}\right ) \|z_1 - z_2 \|_{_{\textup{LBW}(\Sigma^0)}} \\
\E^{\#}(z_1) - \underline{\E} \leq \tilde{c}_u\left(1 + \|z_1\|^2_{\textup{LBW}(\Sigma^0)} \right) \,,
\end{dcases}
\end{equation*}
\item either $\sup\E^{\#}<+\infty$ or there exists $\tilde{M}, \tilde{c}_l$ such that for all $z$
\[
\E^\#(z) - \underline{\E} \geq \tilde{c}_l \| z \|^2_{\textup{LBW}(\Sigma^0)} \qquad \textup{for}\quad \|z\|_{\textup{LBW}(\Sigma^0)} \geq \tilde{R}\,,
\]
\end{enumerate}
see Assumption 3.1, Theorem 3.1, and Theorem 3.2 in \cite{carrillo2021consensus}. Condition i) is equivalent to what we have assumed in Assumption \ref{asm:mf}. 
We note that condition ii) is a small modification of \cite{carrillo2021consensus}, Assumption 3.1, where  the Lipschitz constant takes the form $(\|z_1\| + \|z_2\|)$, but
this change does have an impact on the proof. Also, the quadratic upper bound follows from the local Lipschitz assumption. Altogether, thanks to the equivalence between the LBW norm and the second moment $M_2(\mu)$ for the corresponding measure $\mu$ (see \eqref{eq:moments}), then condition ii) follows form the locally Lipschitz condition \eqref{asm:lip} on $\E$. For the same reason, also the quadratic lower bound iii) for $\E^\#$ is equivalent (up to constants) to the quadratic lower bound \eqref{eq:growth} in Assumption \ref{asm:mf} for $\E$.
\end{proof}

We now check more precisely under which condition $\mathcal{V}, \mathcal{W}$ grow quadratically at infinity as condition \eqref{eq:growth}, which appears in Assumption \ref{asm:mf}.
Note that, since the regularized log-entropy  $\mathcal{U}_\ve$ is bounded, quadratic growth of  $\mathcal{V}$
also implies quadratic growth of $\KL_\ve(\cdot |\mu^\targ)$ for $\mu^\targ \propto \exp(-V)$.

\begin{lemma} \label{l:growth} The growth condition \eqref{eq:growth} is satisfied for the energies $\mathcal{V}, \mathcal{W}$ provided
\begin{align*}
V(x) - \inf V &\geq 2 c_l |x|^2   &\textup{for} \quad |x|> R\,, \\
W(x,y) - \inf W&\geq 2 c'_l (|x|^2 + |y|^2)  &\textup{for} \quad |x|^2+|y|^2> R\,,
\end{align*}
for some constants $c_l, c'_l, R>0$ and $\inf V, \inf W > \infty$.
\end{lemma}

\begin{proof}
Let  $\mu$ such that $M_2(\mu) \geq R/\sqrt{2} $, it holds
\begin{align*}
\int V(x)\mu(\d x) - \inf V &\geq 2c_l \int_{|x|>R} |x|^2\mu(\d x) + \int_{|x|\leq R}(V(x) - \inf V)\mu(\d x) \\
& \geq 2c_l \int_{|x|>R}|x|^2\mu(\d x) \pm 2c_l \int_{|x|\leq R}|x|^2\mu(\d x) \\
& \geq 2c_l\int|x|^2\mu(\d x) - 2c_l R^2 \\
& \geq 2c_lM_2(\mu)^2 - c_l M_2(\mu)^2  = c_l M_2(\mu)^2
\end{align*}
where in the second line we used that $V(x) - \inf V \geq 0$. 

For $\mathcal{W}$, note that, differently from $\mathcal{V}$, in general $\inf \mathcal{W} \neq \inf W$, and $\inf W \leq \inf \mathcal{W}$. Consider  
\[M_2(\mu)\geq \max \{R/\sqrt{2}, \Delta \mathcal{W}/c'_l  \}\,,
\qquad \textup{where}\quad
 \Delta \mathcal{W} := \inf W - \inf \mathcal{W}\geq 0\,.\] 
 Similar computations as above lead to the following
\begin{align*}
\int &W(x,y) \mu(\d x) \mu(\d y) - \inf \mathcal{W} 
 = \int W(x,y) \mu(\d x) \mu(\d y) - \inf W - \Delta \mathcal{W} \\
 & \geq 2 c'_l \iint_{|x|^2+|y|^2>R}(|x|^2 + |y|^2)\mu(\d x) \mu(\d y) \\
 & \qquad + 2c'_l \iint_{|x|^2+|y|^2\leq R}(W(x,y)- \inf W)\mu(\d x) \mu(\d y) - \Delta \mathcal{W} \\
 & \geq 2 c'_l \iint_{|x|^2+|y|^2>R}(|x|^2 + |y|^2)\mu(\d x) \mu(\d y) \pm 2c_l \iint_{|x|^2 + |y|^2\leq R}(|x|^2 + |y|^2)\mu(\d x) \mu(\d y) - \Delta \mathcal{W} \\
 & \geq 4 c'_l M_2(\mu)^2 - 4c_lR^2 - \Delta \mathcal{W} \\
 & \geq c'_l (4 -2 -1) M_2(\mu)^2 \\
 &= \,c'_l M_2(\mu)^2\,.
\end{align*}
\end{proof}


\subsection{Additional remarks on assumptions to Theorem \ref{t:convergence}} \label{app:proofs:convergence}

In \cref{t:convergence}, we assume the energy function $\E$ we aim to minimize is differentiable with respect to the Gaussian parameters. We recall in the following when this holds for $\mathcal{V}, \mathcal{W}, \mathcal{U}$. In \cref{rmk:fornasier} we also discuss an alternative convergence proof for CBO-type dynamics proposed in \cite{fornasier2024consensusbased}.

\paragraph{Differentiability of $\mathcal{V}$.}
Let $\mu = \mathcal{N}(m, \Sigma)$, we have (see, for instance,  Appendix D in \cite{khan2023bayesian}) 
\[\nabla_{m} \mathcal{V}(\mu) = \int \nabla V(x)\,\mu(\mathrm{d}x)\quad \textup{and} \quad \nabla^2_{m} \mathcal{V}(\mu) = \int \nabla^2 V(x)\,\mu(\mathrm{d}x)\,.\] 
With respect to the covariance, \[\partial_{\Sigma_{ij}} \mathcal{V}(\mu) = c_{ij} \int \partial^2_{ij} V(x)\,\mu(\mathrm{d}x)\qquad \textup{with}\quad c_{ij} = \begin{cases}
    1/2 & \textup{if} \;\; i \neq j \\
    1 & \textup{otherwise}
\end{cases}\] 
and consequently $\partial^2_{\Sigma_{ij}\Sigma_{\ell k}} \mathcal{V}(\mu) = c_{ij}c_{\ell k} \int \partial^4_{ij\ell k} V(x)\,\mu(\mathrm{d}x)$.  
Hence, if $V \in \mathcal{C}^4(\mathbb{R}^d)$ with bounded second- and fourth-order derivatives, then $\mathcal{V}$  satisfies the regularity assumptions in \cref{t:convergence}. 

\paragraph{Differentiability of $\mathcal{W}$.}
To compute the derivatives of $\mathcal{W}$, we view $\mu\otimes\mu$ as a Gaussian measure on $\mathbb{R}^d\times\mathbb{R}^d$ with mean $(m,m)$ and block–diagonal covariance $\mathrm{diag}(\Sigma,\Sigma)$, so we can re-use the computations done for $\mathcal{V}$ variable-wise. 
Recall $\mathcal{W}(\mu)$ is defined in \eqref{eq:VW}, so we have
\begin{align*}
&\nabla_{m}\mathcal{W}(\mu)
= \iint \big(\nabla_x W(x,y)+\nabla_y W(x,y)\big)\,\mu(\mathrm{d}x)\mu(\mathrm{d}y),
\\
&\nabla_{m}^2\mathcal{W}(\mu)
= \iint \big(
\nabla_{xx}^2 W(x,y)+\nabla_{yy}^2 W(x,y)+\nabla_{xy}^2 W(x,y)+\nabla_{yx}^2 W(x,y)
\big)\,\mu(\mathrm{d}x)\mu(\mathrm{d}y).
\end{align*}
With respect to the covariance, we compute
\[
\partial_{\Sigma_{ij}} \mathcal{W}(\mu)
= c_{ij}\iint \big(\partial^2_{x_i x_j} W(x,y)+\partial^2_{y_i y_j} W(x,y)\big)\,\mu(\mathrm{d}x)\mu(\mathrm{d}y),
\]
with $c_{ij}$ as before, and consequently
\begin{multline*}
\partial^2_{\Sigma_{ij}\Sigma_{\ell k}} \mathcal{W}(\mu)
= c_{ij}c_{\ell k}\iint \Big(
\partial^4_{x_i x_j x_\ell x_k} W(x,y)
+
\partial^4_{y_i y_j y_\ell y_k} W(x,y) \\
+
\partial^2_{x_i x_j}\partial^2_{y_\ell y_k} W(x,y)
+
\partial^2_{y_i y_j}\partial^2_{x_\ell x_k} W(x,y)
\Big)\,\mu(\mathrm{d}x)\mu(\mathrm{d}y).
\end{multline*}

\paragraph{Differentiability of $\mathcal{U}$.} We recall from completeness also the case of the log-entropy which we already discusses in \cref{sec:cbo}.

The log-entropy functional $\mathcal U$ is invariant under translations of the mean, which implies that its gradient with respect to $m$ vanishes, i.e.\ $\nabla_m \mathcal U(\mu)=0$.  
For covariance matrices $\Sigma\in\pd$, it is shown in Appendix A.1 of \citep{lambert2022variational} that
\[
\nabla_\Sigma \mathcal U(\mu) = -\tfrac12\,\Sigma^{-1}.
\]
Furthermore, letting $e_\ell$ denote the $\ell$-th canonical basis vector of $\mathbb R^d$, the derivative of the matrix inverse is given by (see \cite{giles2008matrix})
\[
\frac{\partial}{\partial \Sigma_{ij}} \Sigma^{-1}
= -\,\Sigma^{-1} e_i e_j^\top \Sigma^{-1}.
\]

As a result, the boundedness assumptions on second-order derivatives required in \cref{t:convergence} are only valid as long as the covariance matrix remains uniformly positive definite, that is, within a region where $\Sigma \succ \varepsilon I$ for some $\varepsilon>0$.

This condition can be enforced in practice by applying a smooth eigenvalue-clipping procedure to $\Sigma$, as described in \eqref{eq:clip}, which keeps the covariance in the admissible set and thereby guarantees convergence to global minimizers.

\begin{remark} \label{rmk:fornasier}

For CBO methods, a different type of convergence result was proposed \cite{fornasier2024consensusbased}. We discuss briefly the assumption considered there, and
why their result is not directly applicable in our settings. The finite-dimensional objective function $\E^\#$ is not required to be differentiable but it requires to attain a unique global minimum $z^\star$ and to attain an inverse continuity assumption around $z^\star$, see Definition 3.5 in  \cite{fornasier2024consensusbased}. In particular, there must exists $\E_\infty, R_0, \eta>0, \nu \in(0,\infty)$ such that
\[
\|z - z^\star \|_{\textup{BW}(\Sigma^0)} \leq \frac1\eta \left( \E^\#(z) - \E^\#(z^\star)\right)^{\nu}
\]
if $\|z\|_{\textup{BW}(\Sigma^0)} \leq R_0$ and $\E^\#(z) > \E^\#(z^\star) + \E_\infty$ otherwise. In terms of the functional $\E$, this translates into the condition 
\[
\LW_{\mu^0}(\mu, \mu^\star) \leq  \frac1\eta \left( \E(\mu) - \E(\mu^\star)\right)^{\nu}\,,
\]
which is difficult to check.
It is interesting to note that, for $\E = \KL(\cdot\, | \, \mu^\targ)$ and if the solution coincides both with the target and the reference measure,  $\mu^\star = \mu^\targ = \mu^0$, then the above condition is implied by the 
Talagrand’s inequality \cite{otto2000talagrand} with constant $\lambda>0$:
\[
\W(\mu, \mu^\textup{targ}) \le \sqrt{\frac{2}{\lambda} \left( \KL(\mu \,|\, \mu^{\textup{targ}}) - \KL(\mu^{\textup{targ}} \,|\, \mu^{\textup{targ}}) \right)}\,.
\]
since $\KL(\mu^{\textup{targ}} \,|\, \mu^{\textup{targ}}) =0$.

A natural question is whether Talagrand’s inequality implies the growth condition in the LOT geometry in the case $\mu^\star \neq \mu ^\targ \neq \mu^0$, \rev{but we leave it for future work. We note that} lower bounds of $\W$ in terms of $\LW$ have been studied in \rev{\cite{delalande2023stability,carlier2024stability,letrouit2025gluing}, with \cite{letrouit2025gluing}, in particular, covering the case the initial measure is log-concave, and so possibly Gaussian.}
\end{remark}

\section{Numerical experiments and implementational aspects }
\label{app:num}

We provide in this section more details regarding the implementational aspects of \cref{alg:gcbo}, and the experiments illustrated in \cref{sec:num}.

\subsection{Construction of random tangent vectors in LBW} \label{app:random}
For the Euler--Maruyama update \eqref{eq:cbonum} discretization of the particles dynamics, it is required to sample random normal vectors in the LBW space with base $\Sigma^0$. 

Let $\Sigma^0 = I$ and $\{e_i\}_{i=1}^d$ be the canonical base of $\mathbb{R}^d$. As mentioned in \cref{sec:brown}, an orthonormal basis with respect to $\langle \cdot, \cdot \rangle_{I}$ is given by the matrices
\[
J_{ij} =
\begin{cases}
e_i e_i^\top & \textup{if} \;\; i = j,\\[4pt]
\big(e_i e_j^\top + e_j e_i^\top\big)/\sqrt{2} & \textup{if} \;\; i < j \, .
\end{cases}
\]
A standard normal vector $B \in \mathrm{Sym}$ according to this basis can be conveniently generated as
\[
B = \frac{\Xi + \Xi^\top}{2}
\qquad \textup{with}\quad \Xi_{ij} \sim \mathcal{N}(0,1)\, .
\]
An orthonormal basis for $\Sigma^0 = I$ can be translated into an orthonormal basis for an arbitrary $\Sigma^0 \in \pd$. Let $\Sigma^0 = Q \Lambda Q^\top$ be its singular value decomposition with $\Lambda = \textup{diag}(\lambda_1,\dots,\lambda_d)$. An orthonormal basis for $\langle \cdot, \cdot \rangle_{\Sigma^0}$ is given by
\[
\tilde J_{ij} =
\begin{cases}
\dfrac{1}{\sqrt{\lambda_i}}\, Q J_{ii} Q^\top & \textup{if} \;\; i=j,\\[6pt]
\dfrac{1}{\sqrt{\lambda_i + \lambda_j}}\, Q J_{ij} Q^\top & \textup{if} \;\; i<j \, .
\end{cases}
\]
This can be checked by directly computing $\langle \tilde J_{ij}, \tilde J_{k\ell} \rangle_{\Sigma^0} = \operatorname{tr}(\tilde J_{ij}\Sigma^0 \tilde J_{k\ell})$. Analogously, a standard normal vector $\tilde B$ according to this basis can be constructed from a standard normal vector $B$ for the basis $\{J_{ij}\}_{i \leq j}$ by rescaling each entry:
\[
\tilde B_{ij} =
\begin{cases}
\dfrac{1}{\sqrt{\lambda_i}}\, B_{ii} & \textup{if} \;\; i=j,\\[6pt]
\dfrac{1}{\sqrt{\lambda_i + \lambda_j}}\, B_{ij} & \textup{if} \;\; i<j,
\end{cases}
\qquad \text{and set } \tilde B_{ji} = \tilde B_{ij}.
\]

\subsection{Tests in $d = 2$} \label{app:num:2}

Figure~\ref{fig:2d} shows the solutions computed by CBO and \rev{the baseline Bures--Wasserstein Gradient Flow (GF)} for a test instance where the target measure is bimodal (and hence not log-concave). The trajectory of the mean of the consensus point is plotted, as well as the final computed Gaussian, represented by an ellipse. Unlike \rev{gradient flows}, the CBO trajectory follows a stochastic path, with the consensus point exploring the search space before converging to its final location. This leads to a slower initial decay of the KL divergence, but CBO ultimately finds a better solution than GF, which becomes trapped in a sub-optimal mode.

Figure \ref{fig:2dmany_main} in \cref{sec:num} illustrated the results when testing \rev{the gradient-based baselines (see \cref{app:baseline})} and CBO against 4 different target measure, A--D. We provide in \cref{table:tests} the detailed models parameters.

\begin{figure}[t]
\centering
\includegraphics[width = 0.9\linewidth]{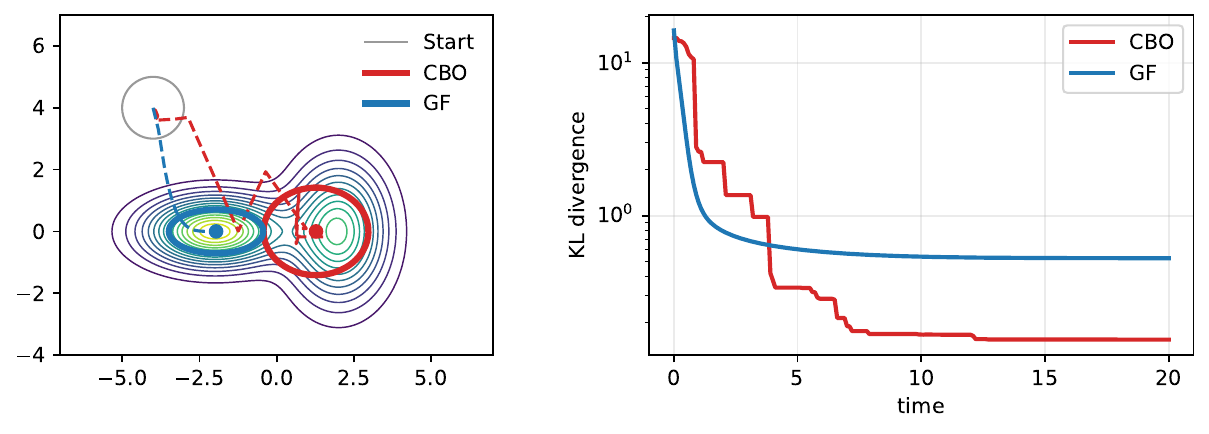}
\caption{
Comparison between one run of the CBO and \rev{BW} GF algorithms in approximating a bimodal target density (contour lines). For CBO, the final Gaussian consensus point is shown. Trajectories of the mean of the consensus point and of the GF dynamics are also included. On the right, the evolution of the objective $\E = \KL$ shows that CBO finds a better solution to the VI problem. Parameters: $\Delta t = 0.1, \lambda = 1, \sigma = 5, N = 20, \alpha = 10^4$. Reference measure for LBW geometry: $\mathcal{N}(0, I)$.}
\label{fig:2d}
\end{figure}

\begin{table}[t]
\caption{Gaussian mixture model parameters (means, covariances, and mixture weights) for targets A–D, see \eqref{eq:gmm}.}
 \label{table:tests}
\centering
\resizebox{1\linewidth}{!}{
\begin{tabular}{|l|l|l|l|l|}
\hline
Target & $K $ & means & covariances & weights \\ \hline
A & 2 &
\begin{tabular}[c]{@{}l@{}}$m_1 = (-2.2,\,0.0)$\\ $m_2 = ( 2.2,\,0.0)$\end{tabular} &
\begin{tabular}[c]{@{}l@{}}$\Sigma_1 =
\begin{pmatrix}
1 & 0.2\\
0.2 & 0.6
\end{pmatrix}\;\;
\Sigma_2 =
\begin{pmatrix}
1 & -0.2\\
-0.2 & 0.6
\end{pmatrix}$\end{tabular} &
\begin{tabular}[c]{@{}l@{}}$w_1=0.5$\\
$w_2=0.5$
\end{tabular}\\ \hline
B & 2 &
\begin{tabular}[c]{@{}l@{}}$m_1 = (-1.77,\,1.06)$\\ $m_2 = (-0.35,\,-0.35)$\end{tabular} &
\begin{tabular}[c]{@{}l@{}}$\Sigma_1 =
\begin{pmatrix}
1.25 & -0.25\\
-0.25 & 1.25
\end{pmatrix}\;\;
\Sigma_2 =
\begin{pmatrix}
2.50 & -1.50\\
-1.50 & 2.50
\end{pmatrix}$\end{tabular} &
\begin{tabular}[c]{@{}l@{}}$w_1=0.5$\\
$w_2=0.5$
\end{tabular} \\ \hline
C & 4 &
\begin{tabular}[c]{@{}l@{}}$m_1 = (-2.47,\,1.06)$\\ $m_2 = (-1.48,\,0.64)$\\ $m_3 = (-2.05,\,0.07)$\\ $m_4 = (0.20,\,-1.61)$\end{tabular} &
\begin{tabular}[c]{@{}l@{}}$\Sigma_1 =
\begin{pmatrix}
0.45 & 0\\
0 & 0.45
\end{pmatrix}\;\;
\Sigma_2 =
\begin{pmatrix}
1.9 & -1.9\\
-1.9 & 2.3
\end{pmatrix}$\\[6pt]
$\Sigma_3 =
\begin{pmatrix}
2.3 & -1.9\\
-1.9 & 1.9
\end{pmatrix}\;\;
\Sigma_4 =
\begin{pmatrix}
2.51 & -2.49\\
-2.49 & 2.51
\end{pmatrix}$\end{tabular} &
\begin{tabular}[c]{@{}l@{}}$w_1=0.25$\\
$w_2=0.30$\\ 
$w_3=0.30$\\
$w_4=0.15$\end{tabular} \\ \hline
D & 4 &
\begin{tabular}[c]{@{}l@{}}$m_1 = (-1.5,\,-2.0)$\\ $m_2 = (1.5,\,0.7)$\\ $m_3 = (-1.5,\,0.7)$\\ $m_4 = (1.5,\,-2.0)$\end{tabular} &
\begin{tabular}[c]{@{}l@{}}$\Sigma_1 = \Sigma_2 = \Sigma_3 = \Sigma_4 =
\begin{pmatrix}
0.7 & 0\\
0 & 0.5
\end{pmatrix}$
\end{tabular} &
\begin{tabular}[c]{@{}l@{}}$w_1=0.2$\\$w_2=0.2$\\ $w_3=0.2$\\
$w_4=0.4$\end{tabular} \\ \hline
\end{tabular}
}
\end{table}

%
%

\paragraph{Sensitivity analysis.} In CBO algorithms, the diffusion parameter $\sigma$ is crucial for balancing particle exploration of the search space with the emergence of consensus. Small values of $\sigma$ may lead to premature convergence, while large values can prevent convergence altogether. Values $\sigma \in [3,5]$ yield the best performance across all test problems considered (see Figure~\ref{fig:sigma}).

The number of particles $N$ is also an important choice, as it determines the trade-off between computational accuracy and efficiency. In the tests considered, however, there is little improvement beyond $N = 16$ particles, as shown in Figure \ref{fig:N}.

We also test the impact on the linearization procedure. So far  we have kept the base measure for the LBW geometry to be the standard normal distribution  $\mathcal{N}(0,I)$, that is, $\Sigma^0 = I$. To mitigate the (eventual) loss in geometry information, it is natural to think of updating the reference measure during the computation, from time to time, to linearize the space around the current consensus point. We tested this strategy for different update frequencies $\Delta t_\textup{up}>0$, from the smallest one possible, $\Delta t_\textup{up} = \Delta t = 0.1$ to no update at all $\Delta t_\textup{up} = 12.8> T_{\max{}} = 10$.
As we can noticed from the results of the experiments, see Figure \ref{fig:base}, in the problem considered there is no benefit in updating the base measure. For the target measure C, the update strategy actually deteriorates the algorithm's accuracy. We conjecture this may be due to the numerical error introduced by frequently computing the logarithmic and exponential BW maps.


\begin{figure}[t]
\centering
\begin{subfigure}[t]{0.48\linewidth}
\includegraphics[width = 1\linewidth]{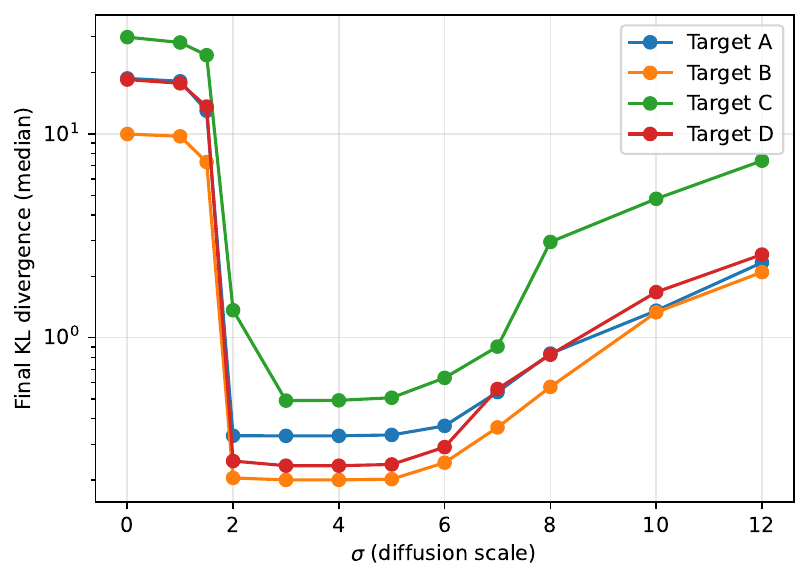}
\caption{Impact of diffusion parameter $\sigma$}
\label{fig:sigma}
\end{subfigure} \hfill
\begin{subfigure}[t]{0.48\linewidth}
\includegraphics[width = 1\linewidth]{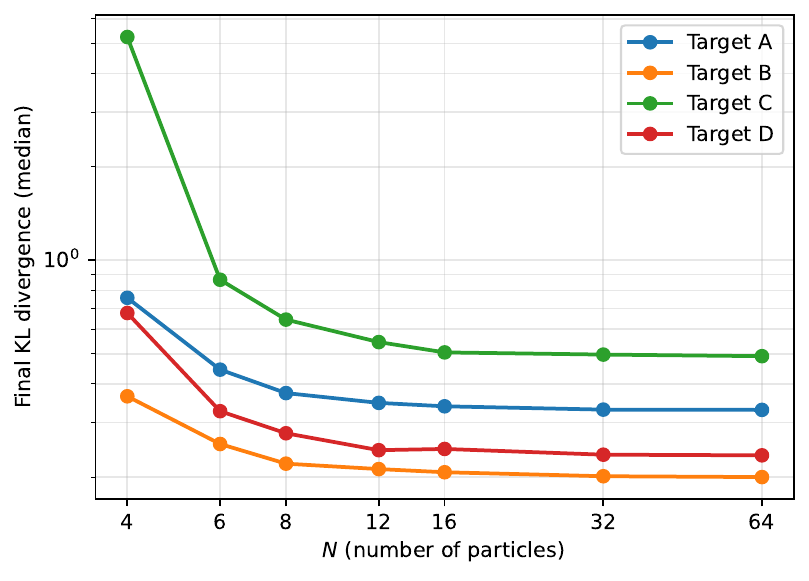}
\caption{Impact of number of particles $N$}
\label{fig:N}
\end{subfigure}
\bigskip

\begin{subfigure}[t]{0.48\linewidth}
\includegraphics[width = 1\linewidth]{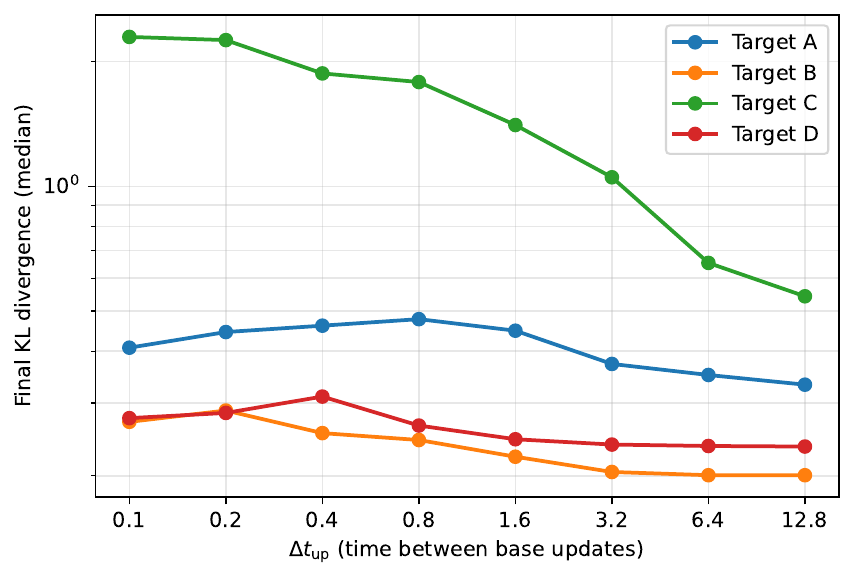}
\caption{Impact of update reference measures}
\label{fig:base}
\end{subfigure}
\caption{Sensitivity analysis of CBO performance with respect to (A) the diffusion parameter $\sigma$, (B) the number of particles $N$, (C) update frequency of the reference measure $\mathcal{\mu}_0  = \mathcal{N}(0,\Sigma^0)$.  
The curves show the median KL divergence averaged across 100 runs with different random initializations. 
Same parameters as experiment in Figure \ref{fig:2dmany_main}.}
\label{fig:sensisitivity}
\end{figure}

\subsection{Tests in $d = 10$}
\label{app:num:10}

We provide here more details regarding the experiments in $d = 10$ 
described in \cref{sec:num} and \cref{fig:d10}.

Each target distribution is a randomly generated $K$-component Gaussian mixture model: mixture weights are sampled from a Dirichlet distribution, component means are sampled uniformly on the sphere of radius $R_{\textup{mean}}\sqrt{d}$ (so that components are well separated in high dimension, with $R_{\textup{mean}}$ controlling the spread), and component covariances are random SPD matrices with eigenvalues in $[\lambda_{\min},\lambda_{\max}]$. 

We set $d=10$, $K=5$, $R_{\textup{mean}}=3.0$, $\lambda_{\min}=0.4$, and $\lambda_{\max}=2.0$. 
Both methods are initialized from a Gaussian centered at a random point near the origin with covariance equal to the identity, and we run them for a horizon $T=75$ with step size $\Delta t = 0.1$. 
For CBO we employ $N=100$ particles with parameters $\alpha=10^4$, $\sigma=2.5$, $\lambda=1.0$, and we do not update the base. 

To compare \rev{CBO with the different baseline} methods robustly, we generate $M=20$ independent random GMM instances and record the evolution of $\mathrm{KL}(\mu\,|\,\mu^\targ)$ for each method. 
Since absolute KL values vary between instances, each trajectory is normalized by the best KL value achieved on that instance, 
\[
\mathrm{RelKL}_i(t) \;=\; \frac{\mathrm{KL}_i(t)}{K_i^\star}, \quad 
K_i^\star = \min_{t,m} \mathrm{KL}_i^m(t),
\]
where $i$ indexes the instance and $m \in \rev{\{\textup{CBO}, \textup{BW},\textup{SVGD},\textup{FR}\}}$ the method. 
We then report the relative KL, $\mathrm{RelKL}_i(t)$, aggregated across instances by plotting the median together with the interquartile range $[0.25, 0.75]$. 

\paragraph{Results.} 
From Figure~\ref{fig:d10}, we notice that the interquantile range of CBO is smaller than that of \rev{BW and FR}. We conjecture that \rev{BW and FR} may sometimes get stuck in local minima, while CBO computes more robust solutions across different instances of the problem thanks to the particle exploration. On average, \rev{though, the Gaussian SVGD} algorithm computes better solutions than \rev{CBO and other baselines} for this class of problems.

\begin{remark}
We note that extensions of particle-based optimizers to very high dimensions typically require additional heuristics to keep the computational cost manageable. 
For instance, in \cite{carrillo2021consensus} a random batch technique for the computation of the consensus point was proposed to reduce the number of function evaluations per step. 
Another delicate aspect is the choice of the diffusion parameter $\sigma$. 
As noted in Section~4.3 of \cite{borghi2023constrained}, the interval of values of $\sigma$ leading to good performance tends to shrink as $d$ increases, and particles become more prone either to converge prematurely or to diverge. 
To tackle high-dimensional machine learning problems, in \cite{carrillo2021consensus} the authors also propose a heuristic in which a relatively small $\sigma$ is used, but particles are re-initialized with white noise at the end of each training epoch. 
Such strategies may also be applied in our context to address high-dimensional Gaussian VI problems.
\end{remark}

\rev{\subsection{Details on baseline methods}
\label{app:baseline}

In the experiments, we compared the proposed CBO methods with different algorithms for optimization over the space of Gaussian measures. They are single-trajectory algorithms which do not employ a set of Gaussian particles, but a single one. We considered the Bures--Wasserstein (BW) gradient flow \cite{lambert2022variational}, the Gaussian Stein Variational Gradient Descent (SVGD) \cite{liu2023towards}, and the natural, or Fisher--Rao (FR) gradient flow \cite{barfoot2020multivariate,liero2025evolution}. They all aim to minimize $\KL(\mu\,|\,\mu^\targ)$ and are discretized via an explicit Euler scheme and same quadrature approximation for expected values as for CBO. 

For completeness, we recall here the corresponding ODEs for the mean and covariance matrix $(m_t,\Sigma_t)\in \mathbb{R}^d\times\psd$. In the following, $X_t$ is a random variable with law $\mathcal{N}(m_t,\Sigma_t)$. The objective energy to be minimized is the Kullback--Leibler divergence $\KL(\,\cdot\,\vert\,\mu^\targ)$, where $\mu^\targ \propto e^{-V}$ for some potential $V\in\mathcal{C}^2(\mathbb{R}^d)$.

\begin{itemize}
\item 
The Bures--Wasserstein Gradient Flow (BW/GF) has been studied in \cite{lambert2022variational}, and reads
\begin{equation}
\begin{dcases} \dot{m}_t &=  - \mathbb{E}\nabla V(X_t) \\  \dot{\Sigma}_t &= 2I_d - \mathbb{E}\!\left[\nabla V(X_t)\otimes (X_t-m_t) + (X_t-m_t)\otimes \nabla V(X_t)\right]\, .
\end{dcases}
\end{equation}

\item 
Gaussian Stein Variational Gradient Descent (SVGD) with kernel $ K_1(x,y)=x^\top y+1 $
induces the Gaussian evolution \cite{liu2023towards}
\begin{equation}
\begin{dcases}
\dot{m}_t
&=
\left(I_d-\mathbb{E}\nabla^2V(X_t)\,\Sigma_t\right)m_t
-
\left(1+\lvert m_t\rvert^2\right)\,\mathbb{E}\nabla V(X_t) \\ 
\dot{\Sigma}_t
& =
G_t\Sigma_t+\Sigma_t G_t^\top, \qquad \textup{where}\quad G_t
:=
I_d-\mathbb{E}\nabla^2V(X_t)\,\Sigma_t\,.
\end{dcases}
\end{equation}
\item 

Natural, or Fisher--Rao (FR), gradient flow \cite{barfoot2020multivariate,liero2025evolution} is given by
\begin{equation}
\begin{dcases}
\dot{m}_t &= -\Sigma_t\,\mathbb{E}\nabla V(X_t),\\
\dot{\Sigma}_t &= G_t\Sigma_t+\Sigma_t G_t^\top,
\qquad \textup{where} \qquad G_t
:=
\frac12\left(I_d-\Sigma_t\,\mathbb{E}\nabla^2V(X_t)\right)\, .
\end{dcases}
\end{equation}
\end{itemize}
}

\rev{
\section{Extension to GMM via multi-swarm approach}
\label{app:gmm}

One may wonder whether the algorithm can be extended to optimize over the richer class of Gaussian Mixture Models (GMMs) as done in \cite{lambert2022variational} where many Gaussian particles are used to approximate the BW gradient flow. To do so, we propose a multi-swarm dynamics, inspired by particle systems for multi-objective optimization \cite{klamroth2024consensus,borghi2023adaptive}.

\paragraph{Swarms' dynamics.}  The full particle system is divided in $n_\swarm$ sub-swarms, and a Gaussian CBO dynamics, analogous to single-swarm algorithm, is prescribed within the sub-swarm. The only difference lays on the definition of the objective function, which is different for every swarm. Let $\overline{\mu}^{\alpha,\ell}$, $\ell = 1,\dots, n_{\swarm}$ be the swarm's barycenters, the objective function of the $\ell$-th swarm is given by
\begin{equation*} \label{eq:locKL} \mathcal{E}^\ell(\nu) := \KL \Bigg( \frac1{n_\swarm}\nu + \frac1{n_\swarm-1}\sum_{ h \neq \ell} \overline{\mu}^{\alpha, h} \; \Bigg |\; \mu^\targ \Bigg)\,.
\end{equation*}
Intuitively, the  $\ell$-th swarm should find the best Gaussian measure $\nu$ that, when summed up with the other swarm's barycenters  $\overline{\mu}^{\alpha, h}$, $h\neq \ell$, provides the best match with respect to the given target $\mu^\targ$.
Note that, to define the barycenters one needs the objective function $\mathcal{E}^\ell$, which, in turns, requires knowledge of the barycenters, so the definition appears to be ill-posed. In practice, the algorithm starts from unweighted barycenters at step $k =1$ to define the swarms' objective functions at $k = 1$, and then the objects can be defined recursively. The precise algorithmic strategy is described in Algorithm \ref{alg:swarms} and a validation test is presented in Figure \ref{fig:2d_multi}.

\begin{figure}[t]
\centering
\includegraphics[width = 0.495\linewidth]{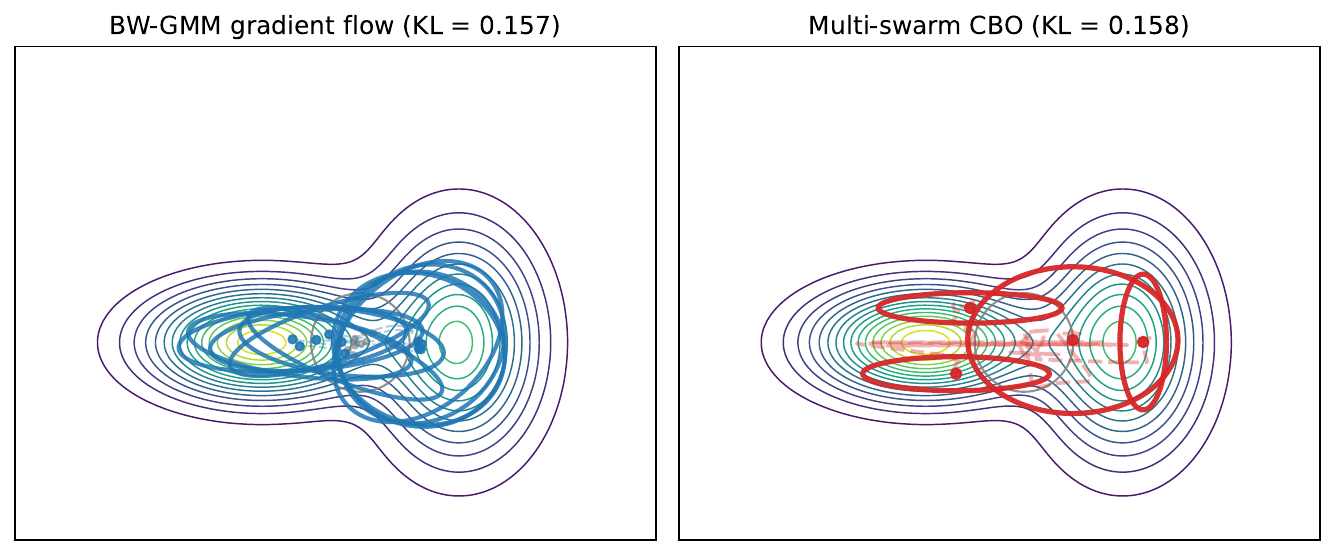}
\includegraphics[width = 0.495\linewidth]{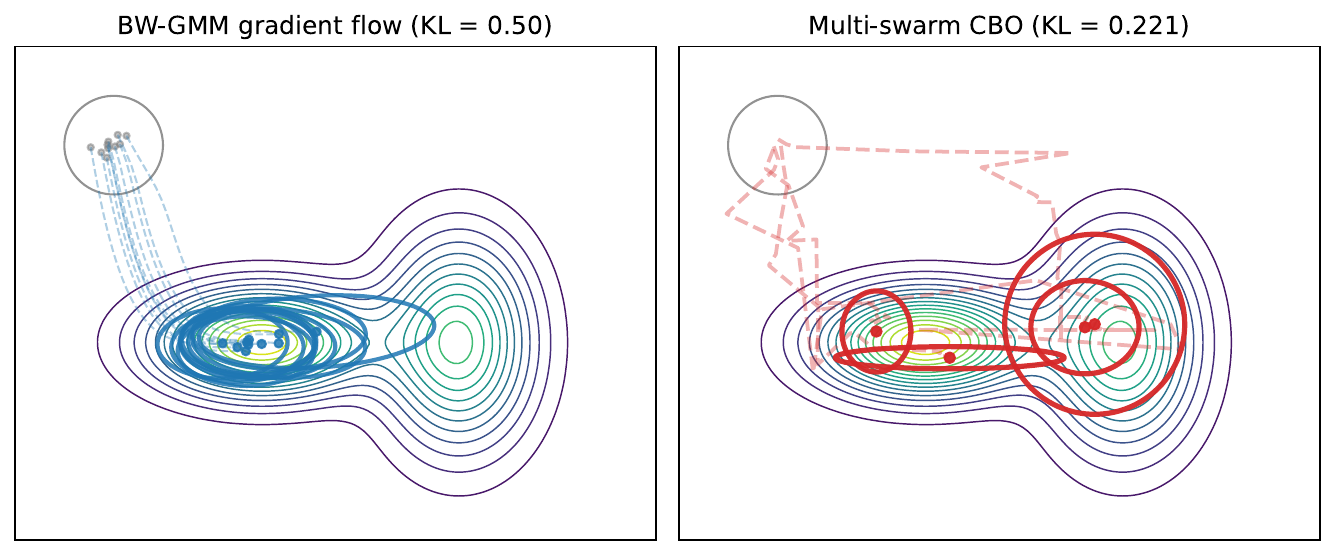}
\caption{\textbf{Extension to GMM via multi-swarm approach.}
Validation of the multi-swarm CBO approach and comparison with the GMM approximation of the Wasserstein Gradient Flow proposed in \cite{lambert2022variational}. In the multi-swarm strategy, the particles are divided into $n_S=4$ swarms, each with $N=10$ Gaussian particles; each swarm evolves by an internal Gaussian CBO dynamics while its objective depends on the barycenters of the other swarms, thereby encouraging the swarms to cover different regions of the target. For the GF, we use $N=10$ particles. The left plots correspond to an initialization close to the modes of the target measure, while the right plots correspond to an initialization far from the modes.
Parameters used are the same as $d = 2$ single-swarm experiments. The tests show that CBO-type dynamics is able to find better or comparable approximations of the target measure in terms of KL divergence (see final KL values in the plots' titles). }
\label{fig:2d_multi}
\end{figure}

\begin{algorithm}[h]
  \caption{Multi-swarm Gaussian Consensus-Based Optimization}
  \label{alg:swarms}
  \begin{algorithmic}
    \STATE {\bfseries Input:} Target $\mu^\targ$, reference $\mu^0 = \mathcal{N}(0,\Sigma^0)$
    \STATE \hspace{10mm} parameters $\lambda = 1, \sigma,\Delta t >0, \alpha \gg 1$, number of swarms $n_\swarm \in \mathbb{N}$, swarm size $N \in \mathbb{N}$ 
    \STATE Initialize particles $(m^{\ell,i},T^{\ell,i}) \in \Rd \times \sym$, $i \in [N]$, $\ell \in [n_\swarm]$
    \STATE For each swarm $\ell$, compute the initial unweighted barycenter $\overline{\mu}^{0,\ell}$
    \REPEAT
    \FOR{$\ell = 1$ {\bfseries to} $n_\swarm$}
      \STATE Define the swarm-dependent objective \eqref{eq:locKL}
      \STATE Evaluate $\mathcal{E}^\ell(\mu^{\ell,i})$ with $\mu^{\ell,i} = \mathcal{N}(m^{\ell,i}, \exp_{\Sigma^0}(T^{\ell,i}))$
      \STATE Set swarm weights $\omega^{\ell,i} \propto \exp(-\alpha \mathcal{E}^\ell(\mu^{\ell,i}))$
      \STATE Compute swarm consensus $(\overline{m}^{\alpha,\ell},\overline{T}^{\alpha,\ell})$ with $\{\omega^{\ell,i}\}_{i=1}^{N}$
      \STATE Set $\overline{\mu}^{\alpha,\ell} = \mathcal{N}(\overline{m}^{\alpha,\ell}, \exp_{\Sigma^0}(\overline{T}^{\alpha,\ell}))$
    \ENDFOR
    \FOR{$\ell = 1$ {\bfseries to} $n_\swarm$}
      \FOR{$i=1$ {\bfseries to} $N$}
        \STATE Update particle $(m^{\ell,i},T^{\ell,i})$ as in single-swarm algorithm using swarm consensus $(\overline{m}^{\alpha,\ell},\overline{T}^{\alpha,\ell})$
      \ENDFOR
    \ENDFOR
    \UNTIL{convergence reached}
    \STATE {\bfseries Output:} Mixture approximation
    $
    \overline{\mu}^{\alpha}
    =
(1/{n_\swarm})\sum_{\ell=1}^{n_\swarm}\overline{\mu}^{\alpha,\ell}
    $
  \end{algorithmic}
\end{algorithm}
}


\end{document}